\documentclass[11pt]{article}

\usepackage{subcaption,graphicx}%
\usepackage{multirow}%
\usepackage{amsmath,amssymb,amsfonts}%
\usepackage{amsthm}%
\usepackage{mathrsfs}%
\usepackage{textcomp}%
\usepackage{manyfoot}%
\usepackage{booktabs}%
\usepackage{algorithm}%
\usepackage{algorithmicx}%
\usepackage{algpseudocode}%
\usepackage{listings}%
\usepackage{natbib}

\usepackage{natbib}
\usepackage{dsfont}
\usepackage{bm}
\usepackage{authblk}

\RequirePackage[%
reversemp,
paperwidth=210mm,
paperheight=297mm,
top={26mm},
headheight={12pt},
headsep={5.15mm},
text={160mm,216mm},
marginparsep=5mm,
marginparwidth=12mm,
bindingoffset=6mm,
footskip=10.13mm]{geometry}%

\theoremstyle{plain}%
\newtheorem{thm}{Theorem}
\newtheorem{con}[thm]{Condition}

\theoremstyle{remark}%
\newtheorem{rem}{Remark}%
\newtheorem{lem}{Lemma}

\theoremstyle{definition}%

\newcommand{\RR}{\mathbb{R}}
\newcommand{\NN}{\mathbb{N}}
\newcommand{\ZZ}{\mathbb{Z}}

\newcommand{\EE}{\mathbb{E}}
\newcommand{\PP}{\mathbb{P}}

\newcommand{\one}{\mathds{1}}

\newcommand{\thresh}[1]{\overline{#1}}

\begin{document}
	
\title{Asymptotic behavior of spatio-temporal point processes of exceedances}

\author[1]{Carolin Forster}
\author[1,2]{Marco Oesting}
\affil[1]{Institute for Stochastics and Applications, University of Stuttgart}
\affil[2]{Stuttgart Center for Simulation Science (SC SimTech),  University of Stuttgart}

\date{}

\maketitle

\begin{abstract}
In this paper, we analyze the asymptotic behavior of the point process of exceedances in a spatio-temporal setting whose points are given by the rescaled occurrence times, the sites and the rescaled values of exceedances. Here, the exceedances over a high threshold
are flexibly defined via site-dependent risk functionals.   
Exploiting the framework of stationary regularly varying multivariate time series, we merge and extend the results from the literature in order to show weak convergence of the considered point processes of extremes and to explicitly
determine its limit distribution.
\end{abstract}

\emph{Keywords:} point process, regularly varying time series, Stationary time series, weak convergence

\section{Introduction}

Precise statistical modeling of spatial and spatio-temporal extremes is important to gain a better understanding and to analyze the risk of events such as heavy precipitation \citep{cooley-etal-2007,buishand-etal-2008,castro-huser-2020,reich-shaby-2012,hazra-reich-staicu-2020,bopp-etal-2021,forster2022non}, strong wind gusts \citep{oesting-etal-2017,castro-etal-2019}, severe air pollution \citep{eastoe-tawn-2009,vettori-etal-2019}, extreme river discharges \citep{katz-etal-2002, asadi-etal-2015, oesting-schnurr-2020} or temperature extremes \citep{EdFO-2019, castro-etal-2021}. Within this framework, modeling of the occurrence of extreme events in space and time is an important task, which is typically done via point processes, see, for instance, \citet{koh2023spatiotemporal}.

The use of point processes in extreme value theory is usually justified by limit theorems that allow for an extrapolation of point patterns beyond the data. Existing literature mainly studies the limiting behavior of point processes of one of the forms
\begin{align}
  &  \sum_{t=1}^{n}\delta_{a_{n}^{-1}\bm X_{t}}, \quad n \geq 1, \label{N_n_allgemein}\\
  &  \sum_{t=1}^{n}\delta_{(t/n,a_{n}^{-1}\bm X_{t})}, \quad n \geq 1,\label{tilde_N_n_allgemein}
\end{align}
where $\delta_x$ denotes the Dirac measure at some point $x$, $\{\bm X_{t}, \, t \in \mathbb{Z}\}$ denotes the multivariate time series of interest and $a_n$ are appropriate normalizing constants, under various assumptions.
For a time series $\{\bm X_{t}, \, t \in \mathbb{Z}\}$ of independent and identically distributed random vectors, it is well known that the limit point processes of \eqref{N_n_allgemein} and \eqref{tilde_N_n_allgemein} converge to a Poisson point process under mild assumptions on the distribution of $\bm X_0$, see, e.g., Proposition~3.21 in \citet{resnick2008extreme}.

More generally, point processes of exceedances over a high threshold $a_n>0$ without the assumption of independence for the time series have been considered. For a univariate stationary time series $\{X_{t}, \, t \in \mathbb{Z}\}$, \citet{leadbetter1976} and \citet{leadbetter1983} define an exceedance over $a_n$ at time $t$ as $X_{t} > a_n$.   
Based on this, 
they studied the point process of rescaled occurrence times of exceedances, i.e.,
$$\sum_{t=1}^{n}\delta_{t/n}\mathds{1}\{X_{t} > a_{n}\}, \quad n \geq 1,$$
and showed that, under certain mixing and local dependence conditions, this process converges in distribution to a Poisson point process. 
\citet{hsing1988exceedance} explored this point process of rescaled occurrence times of exceedances under more general conditions and derived that every limiting point process of exceedances is a compound Poisson process.

Imposing additional assumptions on $\{\bm X_{t}, \, t \in \mathbb{Z}\}$ such as multivariate regular variation,  \citet{davis1998} and \citet{davis1995point} derive results for the limit point process of \eqref{N_n_allgemein}. Further structural results in the case of multivariate regular variation are obtained by \citet{basrak2009regularly} who introduce the so-called tail process and spectral tail process and by this means discover that some additional conditions are redundant. Based on the tail process and spectral tail process, they derive an explicit representation for the weak limit of \eqref{N_n_allgemein} in terms of Laplace functionals and distributional representations.
Further convergence results for point processes of the form \eqref{N_n_allgemein} and \eqref{tilde_N_n_allgemein} can be found in \citet{kulik2020heavy}. 

While all these results have been shown for general multivariate time series, further quantities of interest appear in a spatio-temporal setting where $\bm X_t = (X_t(s))_{s \in \mathcal{S}}$ for some spatial domain $\mathcal{S}$. In this case, not only the time $t$, but also the site $s$ of the occurrence of an extreme event is of interest. However, limit results for such spatio-temporal point processes are still missing in the literature.  To fill this gap, in this paper, we will assume that $\{\bm X_{t}, \, t \in \mathbb{Z}\}$ is a stationary spatially indexed time series, see Section \ref{Preliminaries} for more details, and consider point processes of exceedances that are based on site-dependent risk functionals and consisting of the rescaled values of the time series, the rescaled occurrence times and sites. For this point processes, we derive an explicit representation for the weak limit with respect to Laplace functionals and distributional representations. Moreover, we give an explicit distributional representation for the corresponding limit marked point processes of exceedances. 

The outline of this paper is as follows: In Section~¸\ref{Preliminaries}, we provide theoretical background on jointly regular variation. Section~\ref{Point process convergence} contains our main results on the asymptotic behavior of the considered point processes of exceedances. The corresponding proofs are postponed to Section~\ref{Proofs}.

\section{Preliminaries}
\label{Preliminaries}
Henceforth, we will always assume that $\{\bm X_{t}, \, t \in \mathbb{Z}\}$ is a stationary non-negative spatially indexed time series, i.e., $\bm X_t = (X_t(s))_{s \in \mathcal{S}}$ takes values in $E = [0,\infty)^{\mathcal{S}}$
for some discrete finite spatial domain $\mathcal{S}$ for all $t \in \mathbb{Z}$ and is stationary in time, and equip the space $E$ with the supremum norm $\|\bm x\| := \max_{s \in \mathcal{S}} x(s)$. 

The main assumption in this paper will be that the stationary time series will be jointly regularly varying. In order to define regular variation and link its definition to the concept of vague convergence, it is convenient to provide sets bounded away from zero with useful topological properties by modifying the space $E$.
To this end, we first consider the extended space $\overline{E} = [0,\infty]^{\mathcal{S}}$ equipped with a bounded metric $d_{\overline{E}}$ that is compatible with the standard product topology, e.g.,
$$ d_{\overline{E}}(\bm x, \bm y) = \max_{s \in S} \left| \frac{x(s)}{1+x(s)} - \frac{y(s)}{1+y(s)}\right|, \quad \bm x = (x(s))_{s \in S}, \ \bm y = (y(s))_{s \in S} \in \overline{E}.  $$
Then, the one-point uncompactification $\overline{E}_0 = [0,\infty]^\mathcal{S} \setminus \{\bf 0\}$ \citep{resnick2008extreme} equipped with the metric
$$ d_{\overline{E}_0}(\bm x, \bm y) = d_{\overline{E}}(\bm x, \bm y) \vee 
  \left| \frac{1}{d_{\overline{E}}(\bm 0, \bm x)} - \frac{1}{d_{\overline{E}}(\bm 0, \bm y)} \right|, \qquad \bm x, \bm y \in \overline{E}_0,
$$
becomes a locally compact complete separable metric space and a set $A \subset \overline{E}_0$ is bounded with respect to $d_{\overline{E}_0}$ (and even relatively compact) if and only if it is separated from $\bm 0 \in \RR^{\mathcal{S}}$ in the sense that
$ \inf\{ \|\bm x\|: \, \bm x \in A\} > 0,$
see \cite{kulik2020heavy}. Analogous results hold for the higher-dimensional space  $\overline{E}^T_0 := ([0,\infty]^{\mathcal{S}})^T \setminus \{\bm 0\}$ where $T \subset \ZZ$ is finite. In particular, a set $A \subset \overline{E}^T_0$ is relatively compact if and only if
$$ \inf\left\{ \max_{t \in T} \|\bm x_t\|: \, \bm x \in A\right\} > 0.$$

With this notion in mind, we follow \cite{basrak2009regularly} and call the time series $\{\bm X_{t},  \,t \in \mathbb{Z}\}$ jointly regularly varying with index $\alpha>0$ if
\begin{itemize}
    \item $ \lim_{x \to \infty} \PP(\|\bm X_0\| > x t \mid \|\bm X_0\| > x) = t^{-\alpha}$ for all $t > 1$ 
    and
    \item for every $t_1 < t_2 \in \ZZ$, the vector $(\bm X_{t_1}, \ldots, \bm X_{t_2})$ is multivariate regularly varying with the same scaling function $x \mapsto \PP(\|\bm X_0\| > x)$, i.e., there exists a non-zero measure $\mu_{\{t_1,\ldots,t_2\}}$
    on $\overline{E}^{\{t_1,\ldots,t_2\}}_0$ such that $\mu_{\{t_1,\ldots,t_2\}}(A)$ is finite for every relatively compact set $A \subset \overline{E}^{\{t_1,\ldots,t_2\}}_0$ and 
    $$ \frac{1}{\PP(\|\bm X_0\| > x)} 
    \PP\left( \left(\frac{\bm X_{t_1}} x, \ldots, \frac{\bm X_{t_2}} x \right) \in A \right) \stackrel{x \to \infty}{\longrightarrow}  \mu_{\{t_1,\ldots,t_2\}}(A) $$
    for every relatively compact set $A \subset \overline{E}^{\{t_1,\ldots,t_2\}}_0$ with $\mu_{\{t_1,\ldots,t_2\}}(\partial A)=0$.
\end{itemize}

Corollary 3.2 in \citet{basrak2009regularly} implies that the joint regular variation of the time series $\{\bm X_{t}, \,t \in \mathbb{Z}\}$ with index 
$\alpha$ is equivalent to the existence of a process
$\{\bm \Theta_{t}, \, t \in \mathbb{Z}\}$ with  $\bm \Theta_{t} = (\Theta_t(s))_{s \in \mathcal{S}}$ such that for all $t_1 < t_2 \in \mathbb{Z}$, we have the weak convergence
\begin{equation*} 
\mathcal{L} \left\{\frac{\bm X_{t_1}}{\| \bm X_0\|}, \hdots, \frac{\bm X_{t_2}}{ \| \bm X_0\|} \, \bigg| \, \| \bm X_0\| >x\right\} \stackrel{x \to \infty}{\longrightarrow} \mathcal{L}\{\bm \Theta_{t_1},\hdots,\bm \Theta_{t_2}\}.
\end{equation*}
The process $\{\bm \Theta_{t}, \, t \in \mathbb{Z}\}$ is called the spectral tail process of $\{\bm X_{t}, \,  t \in \mathbb{Z}\}$ and, by definition, satisfies $\|\bm \Theta_0\|=1$ almost surely. Closely related to the spectral tail process $\{\bm \Theta_{t}, \, t \in \mathbb{Z}\}$ is the tail process $\{\bm Y_{t}, \, t \in \mathbb{Z}\}$ given by $\bm Y_t = P_\alpha \bm \Theta_t$, $t \in \ZZ$, where $P_\alpha$ is an $\alpha$-Pareto distributed random variable independent of $\{\bm \Theta_{t}, \, t \in \mathbb{Z}\}$. Noting that $\bm \Theta_t = \bm Y_t / \|\bm Y_0\|$, we see that there is a one-to-one correspondence between the tail process and the spectral tail process of a jointly regularly varying time series.

\section{Point process convergence}
\label{Point process convergence}

In this paper, we want to study point processes consisting of the occurrence times $ t \in \NN$ and the sites $ s \in \mathcal{S}$ of extremes, as well as the appropriately normalized spatial shape $\bm X_t$ of the event.
Here, the occurrence of an extreme event at $(t,s) \in \NN \times \mathcal{S}$ is defined as the exceedance of a site-dependent risk functional $r^{(s)}(\bm X_t)$ over a high threshold. More precisely, for any $s \in \mathcal{S}$, let $r^{(s)}: E \to [0, \infty)$
be a functional that is continuous at the origin and positively homogeneous, i.e., $r^{(s)}(c \bm x) = c r^{(s)}(\bm x)$ for all $c > 0$ and $\bm x \in E$. Furthermore,
we consider a positive scaling sequence $\{a_n\}$ with 
\begin{equation} \label{a_n_sequence}
n \PP(\|\bm X_0\| > a_n) \to 1
\end{equation}
and define the point processes of space-time occurrences of exceedances 
\begin{equation}
\label{N_n}  
N_n(u) := \sum_{s \in \mathcal{S}}\sum_{j=1}^{n}\delta_{(j/n,s,a_{n}^{-1}\bm X_{j})} \mathds{1}\{r^{(s)}(\bm X_{j}) > a_{n} u\}, \quad n \geq 1,
\end{equation}
on $[0,1] \times \mathcal{X}_{u} \subset [0,1] \times \mathcal{S} \times \overline{E}_0$
where
\begin{equation*}
 \mathcal{X}_{u} := \{(s,\bm x) \in \mathcal{S} \times (E \setminus \{0\}): r^{(s)}(\bm x) > u\}.     
\end{equation*}

Here, without loss of generality, we may assume that the threshold $u$ is chosen sufficiently large such that 
    \begin{equation} \label{eq:risk-bound}
        r^{(s)}(\bm x) \leq u \cdot \|\bm x\|, \qquad \text{for all } \bm x \in E \text{ and } s \in \mathcal{S}. 
    \end{equation}
Note that such a threshold always exists, as the risk functionals are assumed to be continuous at the origin and homogeneous. However, the choice of such a threshold $u$ is not unique. The larger the value of $u$ in \eqref{eq:risk-bound} is chosen, the fewer points are considered as exceedances in the sense of the point process $N_n(u)$.

Note that, so far, most of the literature on extremes considers a single risk functional $r: E \to [0,\infty)$ and defines $\bm X_t$ to be extreme if $r(\bm X_t) > a_n u$. Here, using site-dependent functionals allows for more flexible definitions of extremes. We define $\bm X_t$ as extreme if $r^{(s)}(\bm X_t) > a_n u$ for at least one $s \in \mathcal{S}$. In particular, this includes the following cases:
\begin{itemize}
    \item $r^{(s)}(\bm x) \equiv r(\bm x)$ for some risk functional $r$, e.g., $r^{(s)}(\bm x)=\|\bm x\|$ for some norm $\|\cdot\|$ on $\RR^{|\mathcal{S}|}$. This corresponds to the well-studied case mentioned above. In this case, if the vector $\bm X_t$ is extreme, all sites are included in the point processes $N_n(u)$ at time $t$.
    \item non-identical site-dependent risk functionals that can be jointly large, e.g., $r^{(s)}(\bm x) = x(s)$. In this case, if the vector $\bm X_t$ is extreme, at least one site is included in the point processes $N_n(u)$ at time $t$.
    \item site-dependent risk functionals which cannot be jointly large, e.g.,
    $$r^{(s)}(\bm x)=x(s) \mathds{1}\{x(s) \geq  x(s'), s' \in S\}.$$
    In this case, if the vector $\bm X_t$ is extreme, one and only one site is included in the point processes $N_n(u)$ at time $t$.
\end{itemize}
So, depending on the choice of the site-dependent risk functionals, one extreme event can lead to one or multiple points in the point process.

We aim to analyze the asymptotic behavior of $N_n(u)$ as $n \to \infty$, which will be studied by using the corresponding Laplace functionals $\Psi_{N_n(u)}$. Here, the Laplace functional of a point process $\Phi$ on 
$[0,1] \times \mathcal{S} \times E_0$ with $E_0 = [0,\infty)^\mathcal{S} \setminus \{\bf 0\}$ is defined as
\begin{align*}
\Psi_{\Phi}(g) :={}& \EE\bigg[\exp\bigg(-\int_{[0,1] \times \mathcal{S} \times E_0}  g(x)\Phi(\mathrm{d}x)\bigg)\bigg], \quad g \in C_K^{+}([0,1] \times \mathcal{S} \times \overline{E}_0),
\end{align*}
where $C_{K}^{+}([0,1] \times \mathcal{S} \times \overline{E}_0)$
denotes the space of non-negative continuous functions $g: [0,1] \times \mathcal{S} \times \overline{E}_0 \to [0,\infty)$ with compact support.

Consequently, the Laplace functional of the point processes $N_n(u)$ is given by
$$ \Psi_{N_n(u)}(g) = \EE\bigg[\exp\bigg( -\sum_{s \in \mathcal{S}}\sum_{j=1}^{n} g(j/n,s,a_{n}^{-1}\bm X_{j})\one\{r^{(s)}(\bm X_{j}) > a_{n}u\} \bigg) \bigg], \quad n \geq 1. $$
According to Proposition 11.1.VIII in \cite{daley2008introduction}, $N_n(u)$ converges weakly to some point process $N(u)$ on $[0,1] \times \mathcal{S} \times \overline{E}_0$ if and only if $\Psi_{N_n(u)}(g) \to \Psi_{N(u)}(g)$ for all $g \in C_{K}^{+}([0,1] \times \mathcal{S} \times \overline{E}_0)$ as $n \to \infty$.

The following condition ensures that $N_n(u)$ can be approximated by $k_n=\lfloor n/ r_{n}\rfloor$ independent clusters consisting of the exceedances within a time series segment of length $r_n$. We assume that $(r_n)_{n \in \NN}$ is an intermediate sequence such that both the length $r_n$ and the number $k_n$ of segments tend to infinity, i.e., $r_n \to \infty$ and $\frac{r_n}{n} \to 0$ as $n \to \infty$.
\medskip

{
\renewcommand{\thecon}{(M)}
\begin{con}
\label{con_1}
For any non-negative, bounded and compactly supported function $f: [0,1] \times \mathcal{S} \times \overline{E}_0 \to [0,\infty)$, we have
$$c_n(f)-d_n(f) \stackrel{n \to \infty}{\longrightarrow} 0,$$ where
$$ c_{n}(f)= \EE \bigg[ \exp\bigg \{ -\sum_{s \in \mathcal{S}} \sum_{j=1}^{n} f(j/n,s,a_{n}^{-1}\bm X_{j}) \bigg \} \bigg] , $$
and 
$$ d_{n}(f)=\prod_{i=1}^{k_n} \EE \bigg[ \exp\bigg\{ -\sum_{s \in \mathcal{S}} \sum_{j=(i-1) r_n + 1}^{i r_n} f(j/n,s,a_{n}^{-1}\bm X_{j}) \bigg\} \bigg].$$
\end{con}
}

{
\renewcommand{\thecon}{(AC)}
\begin{con}
\label{con_2}
For every $u \in (0,\infty)$,
$$\lim_{m \to \infty} \limsup_{n \to \infty} \PP\left(\max_{m \leq |t| \leq r_{n}} \|\bm X_{t}\| > a_{n} u \mid  \|\bm X_{0}\| > a_{n} u\right)=0.$$
\end{con}
}

\begin{rem}
Condition~\ref{con_1} is a mixing condition that generalizes Condition~4.4 in \citet{basrak2009regularly} and can be motivated in the following way: Splitting the time series into $k_n=\lfloor n/ r_{n}\rfloor$ segments of length $r_n$, the Laplace functional $\Psi_{N_n(u)}(g)$ with $g \in C_{K}^{+}([0,1] \times \mathcal{S} \times \overline{E}_0)$ is of the form
 \begin{align*}
 & \EE \bigg[ \exp\bigg\{ -\sum_{s \in \mathcal{S}} \sum_{j=1}^{n} f(j/n,s,a_{n}^{-1}\bm X_{j}) \bigg\} \bigg] \\
&\sim{} \EE \bigg[ \exp\bigg\{ -\sum_{s \in \mathcal{S}} \sum_{i=1}^{k_n} \sum_{j=(i-1) r_n + 1}^{i r_n} f(j/n,s,a_{n}^{-1}\bm X_{j}) \bigg\} \bigg]
\end{align*}
for the function $f(t,s,\bm x) = g(t,s,\bm x) \one\{r^{(s)}(x) > u\}$, which is non-negative, bounded, and has a compact support. If the different segments were independent, the right-hand side would be equal to
\begin{equation*}
\prod_{i=1}^{k_n} \EE \bigg[ \exp\bigg\{ -\sum_{s \in \mathcal{S}} \sum_{j=(i-1) r_n + 1}^{i r_n} f(j/n,s,a_{n}^{-1}\bm X_{j}) \bigg\} \bigg].
\end{equation*}
Thus, Condition \ref{con_1} can be understood as reflecting the eventual independence of the segments of the time series.

The other condition, Condition \ref{con_2} is an anti-clustering condition which coincides with Condition 4.1 in \citet{basrak2009regularly}. Loosely speaking, it ensures that the number of exceedances within the same time segment does not become too large.
\end{rem}

In the following, for a spatially indexed time series $Z=\{\bm Z_t, \, t \in \ZZ\}$, we use the notation 
$$ M_{k,l}^Z = \begin{cases}
    \max_{k \leq j \leq l} \|\bm  Z_j\|,& \ k \leq l, \\
    - \infty, & \ k > l,
\end{cases} \qquad \text{and} \qquad M_{k}^Z := M_{1,k}^Z,
$$
for short.
If $\{\bm X_t, \, t \in \ZZ\}$ is stationary and jointly regular varying with tail process $\{\bm Y_t, \, t \in \ZZ\}$, then, by Theorem 2.1 in \citet{basrak2009regularly}, the quantity given by 
\begin{equation}
\label{extremal_index}
\theta := \lim_{n \to \infty} \lim_{x \to \infty} \PP(M_{r_n}^{X} \leq x \mid \| \bm X_0\| > x) = \PP(M_{1,\infty}^{Y} \leq 1),
\end{equation}
the so-called extremal index, exists. By Proposition 4.2 in \citet{basrak2009regularly}, we also have $\theta > 0$ if Condition~\ref{con_2} holds. Its reciprocal $\theta^{-1}$ can be interpreted as the mean cluster size of extreme values, see \citet{hsing1988exceedance}. For more details on the extremal index, its existence, and interpretation, see \citet{EKM-1997} and \citet{kulik2020heavy}, for example.

Moreover, we define the limit cluster process similar to 
\citet{basrak2009regularly} as the point process
\begin{align}
\label{C}  
C = \sum_{s \in \mathcal{S}}\sum_{j=-\infty}^{\infty}\delta_{(V,s,\bm Z_{j})},
\end{align}
where $V$ is uniformly distributed on $(0,1)$
and, independently of $V$, the time series $\{\bm Z_{j}, \, j \in \mathbb{Z}\}$ has the same distribution as the tail process
$\{\bm Y_{j}, \, j \in \mathbb{Z}\}$ conditional on $\sup_{j \leq -1} \| \bm Y_j\| \leq 1$. In particular, as $\|\bm Z_j\| \leq 1$ and, consequently, $r^{(s)}(\bm Z_j) \leq u \|\bm Z_j\| \leq u $ for $j \leq -1$, we have
$$ \sum_{s \in \mathcal{S}}\sum_{j=-\infty}^{\infty}\delta_{(V,s,\bm Z_{j})} \one\{r^{(s)}(\bm Z_{j}) > u\} = \sum_{s \in \mathcal{S}}\sum_{j=0}^{\infty}\delta_{(V,s,\bm Z_{j})} \one\{r^{(s)}(\bm Z_{j}) > u\}.$$
This observation will allow us to write the weak limit of the point processes $N_n(u)$ from \eqref{N_n} as a superposition of a random number of such clusters, as we will prove in the next theorem.

\begin{thm}
\label{point_process_N_explicit_representation}
Let $\{\bm X_{t}, \, t \in \mathbb{Z}\}$ be a multivariate strictly stationary regularly varying time series with tail process $\{\bm Y_{t}, \, t \in \mathbb{Z}\}$ such that Condition \ref{con_1} and Condition \ref{con_2} hold. In addition, let $N_n(u)$ be the sequence of point processes given in \eqref{N_n} with $u>0$ satisfying the bound in Equation \eqref{eq:risk-bound}.
Then, $N_n(u)$ converges weakly to a point process $N(u)$ on 
$[0,1] \times \mathcal{X}_u$ which has the same distribution as the superposition of a random number of independent and identical copies of cluster processes, i.e.,
\begin{align} \label{eq:cluster-sum}
N(u)=_d{}\sum_{i=1}^{T} \sum_{s \in \mathcal{S}} \sum_{j=0}^{\infty} \delta_{(V_i,s,\bm Z_{ij})} ~\mathds{1}\{r^{(s)}(Z_{ij}) >u\},
\end{align}
where $T$ is a Poisson random variable with mean $\theta$
and $$\sum_{s \in \mathcal{S}} \bigg(\sum_{j=-\infty}^{\infty} \delta_{(V_i,s, \bm Z_{ij})}\bigg)_{i=1}^{\infty}$$ are i.i.d.~copies of the process $C$ defined in \eqref{C} and independent of $T$.
\end{thm}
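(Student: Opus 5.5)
We follow the strategy of Theorem~4.5 in \citet{basrak2009regularly} and work throughout with Laplace functionals. Fix $g \in C_K^{+}([0,1]\times\mathcal{S}\times\overline{E}_0)$ and put $f(t,s,\bm x) = g(t,s,\bm x)\one\{r^{(s)}(\bm x) > u\}$. Because of \eqref{eq:risk-bound}, $f$ vanishes unless $\|\bm x\| > 1$. Hence $f$ is non-negative, bounded and supported in a relatively compact set, so Condition~\ref{con_1} applies. It replaces $\Psi_{N_n(u)}(g) = c_n(f)$ by the block product $d_n(f)$. Writing $d_n(f) = \prod_{i=1}^{k_n}(1 - p_{n,i})$, with $p_{n,i}$ the expectation of $1 - \exp\{-\sum_s \sum_{j\in \text{block } i} f(j/n,s,a_n^{-1}\bm X_j)\}$, it suffices to show
$$\sum_{i=1}^{k_n} p_{n,i} \to \theta \int_0^1 \EE\Big[1-\exp\Big\{-\sum_{s}\sum_{j\ge 0} g(v,s,\bm Z_j)\one\{r^{(s)}(\bm Z_j)>u\}\Big\}\Big]\,\mathrm{d}v .$$
We also need $\max_i p_{n,i}\to0$, which holds since $p_{n,i} \le r_n \PP(\|\bm X_0\|>a_n)\to 0$. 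The right-hand side above is exactly $-\log$ of the Laplace functional of the compound Poisson cluster process in \eqref{eq:cluster-sum}, where the time coordinates $V_i$ are uniform. This identifies the limit, and by Proposition 11.1.VIII in \cite{daley2008introduction} we obtain weak convergence.

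The core step is a single-block limit with the time coordinate frozen. We will use the uniform continuity of $g$ in the time variable: on block $i$, $j/n$ differs from $i r_n/n$ by at most $r_n/n \to 0$. We then show that, uniformly in $v$,
$$ k_n\, \EE\Big[1-\exp\Big\{-\sum_s\sum_{j=1}^{r_n} g(v,s,a_n^{-1}\bm X_j)\one\{r^{(s)}(\bm X_j)>a_n u\}\Big\}\Big] \to \theta\,\EE\Big[1-\exp\Big\{-\sum_s\sum_{j\ge0} g(v,s,\bm Z_j)\one\{r^{(s)}(\bm Z_j)>u\}\Big\}\Big].$$
Summing over $i$ then gives a Riemann sum, which converges to $\int_0^1$. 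Uniformity in $v$ comes from uniform continuity and boundedness of $g$.

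To prove the block limit, we apply the standard decomposition of Basrak--Segers (their Proposition~4.2/Lemma~4.3 argument). Let $h(\bm x_1,\dots,\bm x_{r}) = 1-\exp\{-\sum_{s,j}\phi(s,\bm x_j)\}$ with $\phi(s,\bm x) = g(v,s,\bm x)\one\{r^{(s)}(\bm x)>u\}$. The function $h$ vanishes when all $\|\bm x_j\|\le 1$, so we can write a telescoping sum over the index of the last (or first) exceedance of level $a_n$:
$$\EE[h(a_n^{-1}\bm X_1,\dots)] = \sum_{j=1}^{r_n}\EE\big[h(a_n^{-1}\bm X_j,\dots,a_n^{-1}\bm X_{r_n}) - h(a_n^{-1}\bm X_{j+1},\dots,a_n^{-1}\bm X_{r_n})\big].$$
By stationarity, each summand is a conditional expectation given $\|\bm X_0\|>a_n$, multiplied by $\PP(\|\bm X_0\|>a_n)\sim 1/n$. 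Condition~\ref{con_2} lets us truncate the sums to $|j|\le m$. Then joint regular variation, i.e.\ convergence to the tail process $\bm Y$, gives
$$\lim_{n\to\infty} k_n\,\EE[h] = \EE\big[h(\bm Y_0,\bm Y_1,\dots) - h(\bm Y_1,\bm Y_2,\dots)\big].$$
The time-change formula then rewrites this as $\EE[h(\bm Y_0,\bm Y_1,\dots)\one\{M^Y_{-\infty,-1}\le1\}]$, which equals $\theta\,\EE[h(\bm Z_0,\bm Z_1,\dots)]$ by \eqref{extremal_index} and the definition of $\bm Z$.

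The main obstacle is the continuity requirement. Passing to the limit under the conditional law needs $h$ to be continuous almost everywhere with respect to the tail process. But $\phi$ contains the indicator $\one\{r^{(s)}(\bm x)>u\}$, which is discontinuous on $\{r^{(s)}=u\}$, and $r^{(s)}$ is only assumed continuous at the origin. We handle this by approximating from above and below: replace the indicator by continuous functions $\psi_\epsilon\circ r^{(s)}$. If $r^{(s)}$ is continuous in general, or after adding the assumption that $\PP(r^{(s)}(\bm Y_j) = u)=0$, the boundary error vanishes as $\epsilon\to0$. Homogeneity of $r^{(s)}$ together with the Pareto radial part of $\bm Y$ suggests the level set $\{r^{(s)}(\bm Y_0)=u\}$ is null, since the radius $P_\alpha$ has a continuous law independent of $\bm \Theta$. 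We would verify this for all $j$ through the representation $\bm Y_j = P_\alpha\bm\Theta_j$.
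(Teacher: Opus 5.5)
Your proof is correct and reaches the same limit as the paper, but by a leaner route. Write $G_k=\exp\{-\sum_{s}\sum_{j\ge k}\thresh{g}_u(t,s,\bm Y_j)\}$.

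The paper proceeds in three stages:
\begin{itemize}
\item It first proves a block limit conditional on $\{M^X_{r_n}>a_n\}$ (Lemma~\ref{Theorem_sim_cond_4.3}) by decomposing at the first exceedance. This brings in $\theta_n\to\theta$ and then needs the finite-dimensional telescoping $a_{m,q}-a_{m,q-1}$ through the spectral representation to reach $\EE[G_1-G_0]$.
\item It then runs Hsing's step-function argument and removes the time dependence inside blocks by the sandwich $(g-\varepsilon)_+\le g\le(g-\varepsilon)_++2\varepsilon$, fed back through Lemma~\ref{Theorem_sim_cond_4.3}.
\item Finally it identifies the cluster representation by noting that \eqref{result_before_bounded_convergence_1} and Lemma~\ref{Theorem_sim_cond_4.3} are two expressions of one limit.
\end{itemize}

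You instead telescope the unconditional block expectation over the starting index. This yields $\EE[G_1-G_0]$ directly from stationarity, Condition~\ref{con_2} and $k_nr_n\PP(\|\bm X_0\|>a_n)\to1$, with no $\theta_n$ at all. You can control the time-freezing error by $|e^{-a}-e^{-b}|\le|a-b|$ together with $\EE[\#\{j\le r_n:\|\bm X_j\|>a_n\}]=r_n\PP(\|\bm X_0\|>a_n)$. This makes the summed error at most $\varepsilon|\mathcal S|(1+o(1))$ and also gives the equicontinuity behind your uniformity in $v$. State this bound explicitly: boundedness of $g$ alone does not give it.

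The price is paid in the identification step. You need $\EE[G_1-G_0]=\EE[(1-G_0)\one\{M^Y_{-\infty,-1}\le1\}]$ for functionals of the whole sequence, and also $\PP(M^Y_{-\infty,-1}\le 1)=\theta$; note that \eqref{extremal_index} only gives the forward probability $\PP(M^Y_{1,\infty}\le1)$. Both follow from the time-change formula:
\begin{itemize}
\item Since $G_{k+1}-G_k\ge0$ vanishes unless $\|\bm Y_k\|>1$, write $1-G_0=\sum_{k\ge0}(G_{k+1}-G_k)$, using $\bm Y_k\to0$.
\item Shift each term by $k$ and sum over the position $-k$ of the first exceedance; Tonelli justifies the interchanges because all terms are nonnegative.
\item Replacing $\thresh{g}_u$ by $c\,\one\{\|\bm x\|>1\}$ and letting $c\to\infty$ gives the backward formula for $\theta$.
\end{itemize}
The paper obtains this identity implicitly, at the cost of the longer Lemma~\ref{Theorem_sim_cond_4.3}.

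One caveat, shared with the paper. Passing to the tail-process limit requires $\bm x\mapsto\one\{r^{(s)}(\bm x)>u\}$ to be continuous off a null set for each $\bm Y_j$. Your Pareto argument does show $\PP(r^{(s)}(\bm Y_j)=u)=0$, but that suffices only if $r^{(s)}$ is itself continuous. If $r^{(s)}$ is continuous only at the origin (e.g.\ the argmax functional at ties), the discontinuity set is larger than the level set. So your ``or after adding $\PP(r^{(s)}(\bm Y_j)=u)=0$'' should read ``and''. The paper tacitly makes the same assumption when it calls its smoothed functions continuous.
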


The result above also allows us to include certain features of extreme events as additional marks to the point process. Mathematically, these features are given by (potentially site-dependent) functionals $\ell^{(s)}: E \to [0, \infty)$, $s \in \mathcal{S}$ such that $\ell^{(s)}$ is continuous at $a_n^{-1}\bm X_t$ for all $n \in \NN$ with probability one.  In principle, the mark $\ell^{(s)}$ can be a chosen very flexibly. For instance, $\ell^{(s)}$ could be identical to the risk functional $r^{(s)}$ that is used to define exceeedances, but it could also be another risk functional or a spatial risk measure such as the percentage of the sites affected by the extreme event, i.e.,
$$ \ell^{(s)}(f) = \frac{1}{|\mathcal{S}|} \sum_{\tilde s \in \mathcal{S}} \one\{r^{(\tilde s)}(f) > u\}, $$
see, e.g., \cite{oesting-huser-2026}.
Including the marks, we obtain the marked point processes
\begin{equation}
\label{M_n}  
M_n(u) = \sum_{s \in \mathcal{S}}\sum_{t=1}^{n}\delta_{(t/n,s,a_{n}^{-1}\bm X_{t},\ell^{(s)}(a_{n}^{-1}\bm X_{t}))} \mathds{1}\{r^{(s)}(\bm X_{t}) > a_{n}u\}, \quad n \geq 1
\end{equation}
on $[0,1]\times \mathcal{X}_{u} \times [0,\infty)$
and investigate in the subsequent theorem its asymptotic behavior as $n \to \infty.$

\begin{thm}
\label{thm_laplacefunctional_marked}
Let $\{\bm X_{t}, \, t \in \mathbb{Z}\}$ be a multivariate strictly stationary regularly varying time series with tail process $\{\bm Y_{t}, \, t \in \mathbb{Z}\}$ such that Condition \ref{con_1} and Condition \ref{con_2} hold. 
Moreover, let $M_n(u)$ be the sequence of point processes given in \eqref{M_n} with $u>0$ satisfying the bound in Equation \eqref{eq:risk-bound}. Then, $M_n(u)$ converges weakly to a point process $M(u)$ on $[0,1] \times \mathcal{X}_u \times [0, \infty)$ that has the same distribution as the superposition of a random number of independent and identical copies of cluster processes, i.e.,
\begin{align*}
&M(u) =_d{} \sum_{i=1}^{T} \sum_{s \in \mathcal{S}} \sum_{j=0}^{\infty} \delta_{(V_i,s,\bm Z_{ij}, \ell^{(s)}(\bm Z_{ij}))} ~\mathds{1}\{r^{(s)}(Z_{ij}) > u\},
\end{align*}
where $T$ is a Poisson random variable with mean $\theta$ and $$\sum_{s \in \mathcal{S}} \bigg(\sum_{j=-\infty}^{\infty} \delta_{(V_i,s, \bm Z_{ij})}\bigg)_{i=1}^{\infty}$$ are i.i.d.~copies of the process $C$ defined in \eqref{C} and independent of $T$.    
\end{thm}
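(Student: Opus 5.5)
The plan is to deduce Theorem~\ref{thm_laplacefunctional_marked} from Theorem~\ref{point_process_N_explicit_representation} by the continuous mapping theorem, rather than redoing the blocking argument. Define the map
$$ h: [0,1] \times \mathcal{X}_u \to [0,1] \times \mathcal{X}_u \times [0,\infty), \qquad (t,s,\bm x) \mapsto (t,s,\bm x,\ell^{(s)}(\bm x)), $$
and the induced map on point measures, $\hat h(\sum_k \delta_{y_k}) = \sum_k \delta_{h(y_k)}$, i.e.\ $\hat h(\Phi) = \Phi \circ h^{-1}$. By construction $M_n(u) = \hat h(N_n(u))$ for every $n$, and the right-hand side of the claimed representation of $M(u)$ equals $\hat h$ applied to the right-hand side of \eqref{eq:cluster-sum}. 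Hence it suffices to show that $\hat h(N_n(u)) \to \hat h(N(u))$ weakly.

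The key steps are the following.

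\emph{Step 1: reduction to Laplace functionals.} For $g \in C_K^+([0,1]\times\mathcal{X}_u\times[0,\infty))$ we have
$$ \Psi_{M_n(u)}(g) = \Psi_{N_n(u)}(g\circ h). $$
So the task becomes showing $\Psi_{N_n(u)}(g\circ h) \to \Psi_{N(u)}(g\circ h)$. The function $g\circ h$ is non-negative and bounded. Its support lies inside the support of $(t,s,\bm x)\mapsto \sup_y g(t,s,\bm x,y)$, which is bounded away from $\bm 0$ in the $\bm x$-coordinate; moreover all points live in $\mathcal{X}_u$, where $\|\bm x\| \geq r^{(s)}(\bm x)/u > 1$ by \eqref{eq:risk-bound}. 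Thus $g\circ h$ has support that is relatively compact in the relevant space.

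\emph{Step 2: handling the lack of continuity.} The function $g\circ h$ need not be continuous, because $\ell^{(s)}$ is only assumed continuous at the points $a_n^{-1}\bm X_t$ almost surely. I would use the extended-continuous-mapping route. Weak convergence $N_n(u)\Rightarrow N(u)$ implies $\int f\,\mathrm dN_n(u) \to \int f\,\mathrm dN(u)$ in distribution for every bounded, compactly supported $f$ whose discontinuity set $D_f$ satisfies $\EE N(u)(D_f)=0$. Applying this with $f=g\circ h$ and then bounded convergence for $\exp(-\cdot)$ gives the Laplace-functional convergence. So I need
$$ D_{g\circ h}\subset \bigcup_s \{(t,s,\bm x): \ell^{(s)} \text{ discontinuous at } \bm x\} \cup \partial(\text{support}), $$
together with $N(u)$ a.s.\ charging neither set. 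Concretely, this requires that $\ell^{(s)}$ be continuous at the points $\bm Z_{ij}$ almost surely.

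\emph{Step 3: the main obstacle.} The hypothesis is phrased for $a_n^{-1}\bm X_t$, not for the tail process, so the continuity requirement of Step 2 must be transferred to the limit. I would obtain it from the representation $\bm Z_{ij}$ as a limit of conditional laws of $a_n^{-1}\bm X_{t+j}$ given an exceedance, interpreting the assumption as holding in the limit. If necessary, I would state this explicitly as the intended reading of the assumption, i.e.\ that the set of discontinuity points of $\ell^{(s)}$ is null for the tail measure. An alternative that avoids this transfer is to redo the proof of Theorem~\ref{point_process_N_explicit_representation} verbatim with $f(t,s,\bm x)=g(h(t,s,\bm x))\one\{r^{(s)}(\bm x)>u\}$. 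Condition~\ref{con_1} already allows non-continuous bounded $f$, and the cluster limit step uses only convergence of conditional laws, where the same null-set issue reappears.

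Granting Step 3, $\Psi_{M_n(u)}(g)\to \Psi_{\hat h(N(u))}(g)$ for all such $g$. Proposition 11.1.VIII in \cite{daley2008introduction} then yields $M_n(u)\Rightarrow M(u):=\hat h(N(u))$. The distributional identity for $M(u)$ follows by applying $\hat h$ to \eqref{eq:cluster-sum}, which maps each atom $(V_i,s,\bm Z_{ij})$ to $(V_i,s,\bm Z_{ij},\ell^{(s)}(\bm Z_{ij}))$.
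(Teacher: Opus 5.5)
Your approach is the paper's: it sets $\varphi(t,s,\bm x)=(t,s,\bm x,\ell^{(s)}(\bm x))$ and writes $M_n(u)=N_n(u)\circ\varphi^{-1}$. It then applies the continuous mapping theorem (Theorem~7.1.19 in \citet{kulik2020heavy}) to the convergence $N_n(u)\to N(u)$ from Theorem~\ref{point_process_N_explicit_representation}. Its only justification is the assertion that $\varphi$ is continuous, and the stated hypothesis on $\ell^{(s)}$ does not give that. Your Steps~1--2 are a more careful version of the same argument. You are right that continuity of $\ell^{(s)}$ at $a_n^{-1}\bm X_t$ is not what the argument uses. What is needed is that $N(u)$ a.s.\ puts no mass on the discontinuity set of $\ell^{(s)}$, i.e.\ that $\ell^{(s)}$ is a.s.\ continuous at every atom $\bm Z_{ij}$ with $r^{(s)}(\bm Z_{ij})>u$. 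The paper's proof tacitly assumes more than this, namely continuity of every $\ell^{(s)}$.

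The one thing to fix is the first option in your Step~3. Continuity at $\bm Z_{ij}$ cannot be deduced from continuity at $a_n^{-1}\bm X_t$ via convergence of conditional laws, because a set can be null for every pre-limit law and still carry all the limit mass. Here is a counterexample:
\begin{itemize}
\item take $\mathcal S=\{s_1,s_2\}$, let $X_t(s_1)$ be i.i.d.\ standard $\alpha$-Pareto, and let $\varepsilon_t$ be i.i.d.\ uniform on $(-1,1)$, independent of them;
\item set $X_t(s_2)=X_t(s_1)+\varepsilon_t$, $r^{(s)}=\|\cdot\|$, $u=1$, and $\ell^{(s)}(\bm x)=\one\{x(s_1)>x(s_2)\}$.
\end{itemize}
Conditions~\ref{con_1} and~\ref{con_2} hold. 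Also $\ell^{(s)}$ is continuous at $a_n^{-1}\bm X_t$ a.s., since $\PP(\varepsilon_t=0)=0$. But $\bm\Theta_0=(1,1)$, so every $\bm Z_{i0}$ lies on the diagonal and the claimed limit has all marks equal to $0$. Meanwhile, the marks of $M_n(u)$ at exceedances are asymptotically Bernoulli$(1/2)$.

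So the strengthened hypothesis you offer as the ``intended reading'' is necessary, not optional. Under it, your Steps~1--2 and the identification of the limit via Proposition~11.1.VIII in \cite{daley2008introduction} are correct. Your formulation also gains something over the paper's literal argument. It covers discontinuous marks such as the paper's own example, the fraction of sites with $r^{(\tilde s)}>u$, as long as the $r^{(\tilde s)}$ are continuous. This follows from the null-atom fact $\PP(r^{(s)}(\bm Y_t)=u)=0$ established in the proof of Lemma~\ref{Theorem_sim_cond_4.3}. A genuinely continuous $\varphi$ excludes such marks.
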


\section{Proofs of the theoretical results}
\label{Proofs}

For any function $g: [0,1] \times \mathcal{S} \times \overline{E}_0 \to [0,\infty)$, we define the thresholded function 
$\thresh{g}_u: [0,1] \times\mathcal{S} \times \overline{E}_0 \to [0,\infty)$ by 
$$ \thresh{g}_u(t,s,\bm x) = g(t,s,\bm x)  \one\{ r^{(s)}(\bm x) > u\}. $$

Using this notation, the Laplace functional $\Psi_{N_n(u)}$ of the point processes $N_n(u)$ can be written as
$$ \Psi_{N_n(u)}(g) = \EE\bigg[\exp\bigg( -\sum_{s \in \mathcal{S}}\sum_{j=1}^{n} \thresh{g}_u(j/n,s,a_{n}^{-1}\bm X_{j}) \bigg) \bigg], \quad n \geq 1. $$

The following lemma and its proof are similar to Theorem 4.3 in \citet{basrak2009regularly}. However, apart from the additional temporal and spatial components, our result also differs in that the continuous function $g$ is multiplied by an indicator function, which makes the resulting thresholded function $\thresh{g}$ discontinuous.

\begin{lem}
\label{Theorem_sim_cond_4.3}
Let $X=\{\bm X_j, \,  j \in \ZZ\}$ be a non-negative, strictly stationary, and jointly regularly varying spatially indexed time series with $\bm X_j = (X_j(s))_{s \in \mathcal{S}}$ and tail process $Y=\{\bm Y_j, \, j \in \ZZ\}$ and assume that Condition \ref{con_2} holds. Then, for all $t \in [0,1]$ and $g \in C_K^+([0,1] \times \mathcal{S} \times \overline{E}_0)$, we obtain
\begin{align*}
& \lim_{n \to \infty} \EE\bigg[\exp\bigg(-\sum_{s \in \mathcal{S}}\sum_{j=1}^{r_n} \thresh{g}_u(t,s,a_{n}^{-1}v \bm X_{j}) \bigg) \, \bigg| \, M_{r_n}^{X} > a_{n} \bigg] \\
={}& 1-\theta^{-1} \EE\bigg[\exp\bigg\{-\sum_{s \in \mathcal{S}} \sum_{j=1}^{\infty} \thresh{g}_u(t,s,v \bm Y_{j}) \bigg\}- \exp\bigg\{-\sum_{s \in \mathcal{S}} \sum_{j=0}^{\infty} \thresh{g}_u(t,s,v \bm Y_{j}) \bigg\}\bigg],
\end{align*}
where $v \in (0,1]$ and the threshold $u$ in $\thresh{g}_u$ is sufficiently large such that Equation~\eqref{eq:risk-bound} holds.
\end{lem}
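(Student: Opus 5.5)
We follow the proof of Theorem~4.3 in \citet{basrak2009regularly}, with its last-exceedance decomposition. Write $h(\bm x) := \sum_{s \in \mathcal{S}} \thresh{g}_u(t,s,v\bm x)$ for fixed $t$ and $v$. By \eqref{eq:risk-bound}, $h(\bm x)=0$ whenever $\|\bm x\| \leq 1$, because $r^{(s)}(v\bm x) \leq u v\|\bm x\| \leq u$. Hence only indices $j$ with $\|\bm X_j\|>a_n$ contribute, and on $\{M_{r_n}^X \le a_n\}$ the exponent vanishes. This gives
$$\EE\big[e^{-\sum_{j=1}^{r_n} h(a_n^{-1}\bm X_j)} \,\big|\, M^X_{r_n}>a_n\big] = 1 - \frac{\EE\big[1-e^{-\sum_{j=1}^{r_n} h(a_n^{-1}\bm X_j)}\big]}{\PP(M^X_{r_n}>a_n)}.$$
Under Condition~\ref{con_2}, Theorem~4.3/Proposition~4.2 of \citet{basrak2009regularly} give $\PP(M^X_{r_n}>a_n)\sim \theta r_n \PP(\|\bm X_0\|>a_n)$. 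It therefore suffices to show
$$\frac{\EE\big[1-e^{-\sum_{j=1}^{r_n} h(a_n^{-1}\bm X_j)}\big]}{r_n\PP(\|\bm X_0\|>a_n)} \to \EE\Big[e^{-\sum_{j\ge1}h(\bm Y_j)}-e^{-\sum_{j\ge0}h(\bm Y_j)}\Big].$$

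For the numerator we telescope. With $S_{k} := \sum_{j=k}^{r_n} h(a_n^{-1}\bm X_j)$ and $S_{r_n+1}=0$,
$$1-e^{-S_1}=\sum_{k=1}^{r_n}\big(e^{-S_{k+1}}-e^{-S_k}\big).$$
Each summand vanishes unless $\|\bm X_k\|>a_n$, since $h(a_n^{-1}\bm X_k)=0$ otherwise. Stationarity shifts the $k$-th term to
$$\PP(\|\bm X_0\|>a_n)\,\EE\Big[e^{-\sum_{j=1}^{r_n-k}h(a_n^{-1}\bm X_j)}-e^{-\sum_{j=0}^{r_n-k}h(a_n^{-1}\bm X_j)} \,\Big|\, \|\bm X_0\|>a_n\Big].$$
We fix $m$ and split the window $\{1,\dots,r_n-k\}$ into $j\le m$ and $m<j\le r_n-k$. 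Condition~\ref{con_2} with $u=1$ shows that the contribution of indices $m<j\le r_n$ is negligible uniformly in $k$. The bound uses $|e^{-a}-e^{-a-b}|\le 1$ together with the fact that terms with $\|\bm X_j\|\le a_n$ vanish, so the difference is at most the probability of an exceedance beyond lag $m$ given $\|\bm X_0\|>a_n$. Averaging over $k$ (the terms with $k>r_n-m$ form an $O(m/r_n)$ fraction) reduces the claim to the convergence
$$\EE\Big[e^{-\sum_{j=1}^{m}h(a_n^{-1}\bm X_j)}-e^{-\sum_{j=0}^{m}h(a_n^{-1}\bm X_j)}\,\Big|\,\|\bm X_0\|>a_n\Big]\to \EE\Big[e^{-\sum_{j=1}^{m}h(\bm Y_j)}-e^{-\sum_{j=0}^{m}h(\bm Y_j)}\Big],$$
followed by $m\to\infty$. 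The limit in $m$ is handled by monotone/dominated convergence: $\sum_j\|\bm Y_j\|^{\ldots}$ is not needed, only that a.s.\ finitely many $\|\bm Y_j\|>1$ (Proposition~4.2 of \citet{basrak2009regularly}), so both sums are eventually constant.

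The main obstacle is this finite-dimensional convergence, because $h$ contains the indicator $\one\{r^{(s)}(v\bm x)>u\}$ and is therefore not continuous. The definition of the tail process gives $\mathcal{L}(a_n^{-1}(\bm X_0,\dots,\bm X_m)\mid\|\bm X_0\|>a_n)\to\mathcal{L}(\bm Y_0,\dots,\bm Y_m)$. We would apply the continuous mapping theorem, which requires the discontinuity set to be null. That set is contained in $\bigcup_{j,s}\{r^{(s)}(v\bm Y_j)=u\}$ plus the boundary of the support of $g$, and $g$ itself is continuous. Since $\bm Y_j=P_\alpha\bm\Theta_j$ with $P_\alpha$ continuous and independent of $\bm\Theta_j$, homogeneity gives $\{r^{(s)}(v\bm Y_j)=u\}=\{P_\alpha\, r^{(s)}(\bm\Theta_j)=u/v\}$. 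Conditioning on $\bm\Theta$, this event has probability zero when $r^{(s)}(\bm\Theta_j)>0$, and it is impossible when $r^{(s)}(\bm\Theta_j)=0$. The conditional expectations are bounded by one, so the convergence follows. If the continuity of $r^{(s)}$ away from the origin is not available, we would instead sandwich $\one\{r^{(s)}>u\}$ between the indicators at $u\pm\varepsilon$ and let $\varepsilon\to0$, using the same null-boundary argument.
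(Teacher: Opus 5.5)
Your argument is correct, but it follows a genuinely different route from the paper. The paper adapts the \emph{first}-exceedance decomposition of Theorem~4.3 in \citet{basrak2009regularly}; your telescoping is not that decomposition, despite your framing.

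\textbf{The paper's route.} It splits $\{M_{r_n}^X>a_n\}$ according to the first index $j$ with $\|\bm X_j\|>a_n$. Stationarity and Condition~\ref{con_2} then reduce the problem to the anchored quantity $\theta^{-1}\EE[\exp\{-\sum_{|j|\le m}h(\bm Y_j)\}\one\{M^Y_{-m,-1}\le 1\}]$, in your notation $h$. To reach $1-\theta^{-1}\EE[e^{-\sum_{j\ge 1}h(\bm Y_j)}-e^{-\sum_{j\ge 0}h(\bm Y_j)}]$ it then needs four further ingredients: a second telescoping in $q$, the time-change formula of Theorem~3.1(ii) in \citet{basrak2009regularly} written in polar form $z\bm\Theta_j$, dominated convergence in the radial variable, and the identity $\theta=\PP(M^Y_{1,\infty}\le 1)$.

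\textbf{Your route.} Your identity $1-e^{-S_1}=\sum_{k=1}^{r_n}(e^{-S_{k+1}}-e^{-S_k})$ makes every summand vanish unless $k$ is an exceedance. After the stationarity shift, each term is therefore already of the forward-looking form $e^{-\sum_{j\ge1}h}-e^{-\sum_{j\ge0}h}$ given $\|\bm X_0\|>a_n$. One finite-dimensional convergence, a $k$-uniform bound from Condition~\ref{con_2}, an $O(m/r_n)$ boundary term and $\|\bm Y_j\|\to 0$ a.s.\ then finish the proof without any time-change formula. This is shorter and more transparent.

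\textbf{What the paper's longer route buys.} It produces the intermediate identity \eqref{result_before_bounded_convergence_1}. After $m\to\infty$, this expresses the same limit as $\theta^{-1}\EE[\exp\{-\sum_{j\in\ZZ}h(\bm Y_j)\}\one\{\sup_{j\le -1}\|\bm Y_j\|\le 1\}]$. That is exactly what the proof of Theorem~\ref{point_process_N_explicit_representation} uses to identify the limit as a Poisson superposition of clusters built from $\bm Z$. With your proof, that link would have to be supplied separately, e.g.\ via the time-change formula.

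\textbf{On the discontinuity of $h$.} Your null-boundary argument (conditioning on $\bm\Theta$ and using that $P_\alpha$ is atomless) is sound. However, it presupposes that $r^{(s)}$ is continuous, at least at almost every value of $v\bm Y_j$, and not merely at the origin. The paper's proof makes the same assumption: its approximants $g(t,s,v\bm y_j)f_n^{+}(r^{(s)}(v\bm y_j))$ are continuous only if $r^{(s)}$ is.

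Your fallback of sandwiching between the thresholds $u\pm\varepsilon$ does not remove this assumption. Those indicators are just as discontinuous as $r^{(s)}$ itself. Without continuity the conclusion can actually fail. Take an i.i.d.\ series with $\bm X_0$ a.s.\ off the diagonal but $\bm\Theta_0$ on it, and $r^{(s)}(\bm x)=\|\bm x\|\one\{x(s_1)\neq x(s_2)\}$. Then the right-hand side equals $1$, while the left-hand side tends to a value strictly below $1$ for suitable $g$.

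Since the paper relies on the same implicit assumption, this is not a gap relative to it. You should, however, state continuity of $r^{(s)}$ explicitly rather than point to the fallback.
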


\begin{proof}
Let $g \in C_K^+([0,1] \times \mathcal{S} \times \overline{E}_0)$, $v \in (0,1]$ and $t \in [0,1]$ be fixed and define
$$\tilde{c}_{n}(k,l)=\exp\bigg (-\sum_{s \in \mathcal{S}} \sum_{j=k}^{l} 
\thresh{g}_u(t,s,a_{n}^{-1}v \bm X_{j}) \bigg),$$
where $k \leq l$ and $k,l \in \mathbb{N}$.
Thus, the statement of the theorem translates into
\begin{align} \label{eq:final-statement}
& \lim_{n \to \infty} \EE\left[\tilde{c}_{n}(1,r_n) \, \Big| \, M_{r_n}^{X} > a_{n}\right]  \\
={}& 1-\theta^{-1} \EE\bigg[\exp\bigg \{-\sum_{s \in \mathcal{S}} \sum_{j=1}^{\infty} \thresh{g}_u(t,s,v \bm Y_{j}) \bigg\}- \exp\bigg\{-\sum_{s \in \mathcal{S}} \sum_{j=0}^{\infty} \thresh{g}_u(t,s,v \bm Y_{j}) \bigg\}\bigg], \nonumber
\end{align}
which we eventually need to show. As an intermediate result, we will first show that
\begin{align} \label{tilde_cn_cond_max_rn}
& \lim_{n \to \infty} \EE\left[\tilde{c}_{n}(1,r_n) \, \Big| \, M_{r_n}^{X} > a_{n} \right]  \nonumber \\
&= \lim_{m \to \infty} \lim_{n \to \infty} \theta_{n}^{-1} \EE\left[\tilde{c}_{n}(-m,m) \cdot 
\one\{M_{-m,-1}^{X} \leq a_{n} \} \, \Big| \, \| \bm X_0\| > a_{n}\right],
\end{align}
with
\begin{equation}
\label{widetilde_theta_n}
\theta_{n} = \frac{\PP(M_{r_n}^{X}>a_{n})}{r_n\PP(\| \bm X_0\| > a_{n})}.    
\end{equation}

To this end, the event $\{M_{r_n}^{X}>a_{n}\}$
is decomposed according to the smallest $j \in \{1,\hdots,r_n\}$ such that $ \| \bm X_j\| > a_{n}$ and hence
\begin{equation*}
\EE\left[\tilde{c}_{n}(1,r_n) \cdot \one\{ M_{r_n}^{X} >a_{n}\}\right]=\sum_{j=1}^{r_n}\EE\left[\tilde{c}_{n}(1,r_n) \cdot \one\{ M_{j-1}^{X} \leq a_{n} < \|\bm X_{j}\|\}\right],
\end{equation*}
where $M^{X}_0 = -\infty$ by definition.
We fix $m>0$ and let $n$ be sufficiently large such that $r_n \geq 2m +1$. 
For $j \in \{m+1,\hdots,r_n-m\}$, we have $\tilde {c}_{n}(1,r_n)=\tilde {c}_{n}(j-m,j+m)$ except if 
$$ \max_{i=1,\ldots, j-m-1,j+m+1,\ldots,r_n} \max_{s \in S} r^{(s)}(\bm X_i) > a_nu/v, $$
which, by \eqref{eq:risk-bound}, implies that 
$$M_{1,j-m-1}^{X}\lor M_{j+m+1,r_n}^{X} > a_{n}/v,$$
and, since  $1/v \geq 1$, even more,
$M_{1,j-m-1}^{X}\lor M_{j+m+1,r_n}^{X} > a_n$,
where $\lor$ denotes the maximum. 
In particular, 
$$ \tilde{c}_{n}(1,r_n) \cdot \one\{M_{j-1}^{X} \leq a_{n} \} \neq \tilde{c}_{n}(j-m,j+m) \cdot \one\{M_{j-m,j-1}^{X} \leq a_{n} \} $$
only if $M_{1,j-m-1}^{X}\lor M_{j+m+1,r_n}^{X} > a_{n}$. Then, for such $j$,
\begin{align}
\label{triangle_j}
& \triangle_{n,m}(j) 
:={} \Big|\EE\left[\tilde{c}_{n}(1,r_n) \cdot \one\{M_{j-1}^{X} \leq a_{n} < \| \bm X_j\| \} \right] \nonumber\\
& \qquad \qquad \qquad - \EE\left[\tilde{c}_{n}(j-m,j+m) \cdot \one\{M_{j-m,j-1}^{X} \leq a_{n}< \| \bm X_j\| \} \right] \Big| \nonumber\\
={}& \Big|\EE\Big[ \Big(\tilde{c}_{n}(1,r_n) \cdot \one\{M_{j-1}^{X} \leq a_{n}\}  \nonumber\\
& \qquad- \tilde{c}_{n}(j-m,j+m) \cdot \one\{M_{j-m,j-1}^{X} \leq a_{n} \} \Big) \one\{\| \bm X_j\| > a_{n}\} \Big] \Big| \nonumber\\
\leq{}& \PP\Big( \tilde{c}_{n}(1,r_n) \cdot \one\{M_{j-1}^{X} \leq a_{n}\} \neq \tilde{c}_{n}(j-m,j+m) \cdot \one\{M_{j-m,j-1}^{X} \leq a_{n}\},  \| \bm X_j\| > a_{n}\Big) \nonumber \\
\leq{}& \PP(M_{1,j-m-1}^{X}\lor M_{j+m+1,r_n}^{X} > a_{n}, \| \bm X_j\| > a_{n}) \nonumber\\
\leq{}& \PP(M_{-r_n,-m-1}^{X}\lor M_{m+1,r_n}^{X} > a_{n}, \| \bm X_0\| > a_{n} ),
\end{align}
where we used the stationarity of the time series for the last inequality.
For $j \notin \{m+1,\hdots,r_n-m\}$, it holds by using the stationarity again,
\begin{align}
\label{triangle_j_2}
\triangle_{n,m}(j) 
={}& \Big|\EE\left[\tilde{c}_{n}(1,r_n) \cdot \one\{M_{j-1}^{X} \leq a_{n}< \| \bm X_j\| \} \right] \nonumber\\
&- \EE\left[\tilde{c}_{n}(j-m,j+m) \cdot \one\{M_{j-m,j-1}^{X} \leq a_{n}< \| \bm X_j\| \} \right] \Big| \nonumber\\
\leq{} & \PP(\| \bm X_j\| > a_{n}) = \PP(\| \bm X_0\| > a_{n}).
\end{align}
By \eqref{triangle_j} and \eqref{triangle_j_2}, it follows that
\begin{align*}
\triangle_{n,m} :={}& \Big|\EE\left[\tilde{c}_{n}(1,r_n) \mid M_{r_n}^{X} > a_{n} \right]\\
&- \tilde{\theta}_{n}^{-1} \EE\left[\tilde{c}_{n}(-m,m) \one\{M_{-m,-1}^{X} \leq a_{n} \} \mid \| \bm X_0\| > a_{n}\right]\Big| \\   
={}&\bigg| \frac{\EE\left[\tilde{c}_{n}(1,r_n) \cdot \one\{ M_{r_n}^{X} > a_{n}\} \right]}{\PP(M_{r_n}^{X} > a_{n} )} \\
&- \frac{r_n \EE\left[\tilde{c}_{n}(-m,m) \one\{M_{-m,-1}^{X} \leq a_{n} < \| \bm X_0\|\}\right]}{\PP(M_{r_n}^{X} > a_{n})} \bigg| \displaybreak[0] \\
={}&\frac{\sum_{j=1}^{m}\triangle_{n,m}(j) +\sum_{j=m+1}^{r_n-m}\triangle_{n,m}(j) +\sum_{j=r_n-m-1}^{r_n}\triangle_{n,m}(j) }{\PP(M_{r_n}^{X} > a_{n} )} \\
\leq{}& \frac{(r_n-2m) \PP(M_{-r_n,-m-1}^{X}\lor M_{m+1,r_n}^{X} > a_{n} , \| \bm X_0\| > a_{n})}{\PP(M_{r_n}^{X} > a_{n})} \\
& + \frac{2m \PP(\| \bm X_0\| > a_{n})}{\PP(M_{r_n}^{X} > a_{n})} \\
\leq{}& \frac{r_n \PP(M_{-r_n,-m-1}^{X}\lor M_{m+1,r_n}^{X} > a_{n}, \| \bm X_0\| > a_{n})
+ 2m \PP(\| \bm X_0\| > a_{n})}{\PP(M_{r_n}^{X} > a_{n} )} \displaybreak[0] \\
={}& \frac{\PP(M_{-r_n,-m-1}^{X}\lor M_{m+1,r_n}^{X} > a_{n}\mid \| \bm X_0\| > a_{n} )+2m/r_n}{\theta_{n}},
\end{align*}
where $\theta_n$  is given in \eqref{widetilde_theta_n}.
By Proposition 4.2 in \citet{basrak2009regularly}, it holds $\lim_{n \to \infty} \theta_{n} = \theta < \infty$\, where $\theta$ is the extremal index defined in Equation~\eqref{extremal_index}.
Furthermore, by Condition \ref{con_2}, we know
\begin{align*}
  &\lim_{m \to \infty} \lim_{n \to \infty} \PP(M_{-r_n,-m}^{X}\lor M_{m,r_n}^{X} > a_{n} \mid \| \bm X_0\| > a_{n}) \nonumber\\
  &=\lim_{m \to \infty} \limsup_{n \to \infty} \PP\left(\max_{m \leq |j| \leq r_{n}} \|\bm X_{j}\| > a_{n} \, \Big| \,  \|\bm X_{0}\| > a_{n} \right)=0,
\end{align*}
and we can conclude that $$\lim_{m \to \infty} \limsup\limits_{n\rightarrow \infty} \triangle_{n,m}=0.$$
Therefore,
\begin{align*}
 &\lim_{n \to \infty} \EE\left[\tilde{c}_{n}(1,r_n) \, \Big| \, M_{r_n}^{X} > a_{n} \right]  \nonumber \\
   &=  \lim_{m \to \infty} \lim_{n \to \infty} \tilde{\theta}_{n}^{-1} \EE\left[\tilde{c}_{n}(-m,m)\cdot \one\{M_{-m,-1}^{X} \leq a_{n} \} \, \Big| \, \| \bm X_0\| > a_{n} \right],
\end{align*}
i.e., the intermediate result \eqref{tilde_cn_cond_max_rn} is verified.
To calculate the inner limit, we note that, from the representation $\bm Y_t = P_\alpha \bm \Theta_t$, $t \in \ZZ$, and the fact that $$\PP( r^{(s)}(\bm \Theta_{t}) = c/z)=0, \quad c>0,$$ for almost every $z > 1$ as the distribution of $r^{(s)}(\bm \Theta_{t})$ may have at most countably many atoms, it follows that 
\begin{align*}
\PP(r^{(s)}(\bm Y_{t}) = u/v)&=\PP(P_\alpha r^{(s)}(\bm \Theta_{t}) = u/v) \\
& ={} \int_{1}^{\infty} \PP( r^{(s)}(\bm \Theta_{t}) = u/(vz)) \alpha z^{-\alpha-1} \,\mathrm{d}z= 0, \quad t \in \ZZ, 
\end{align*}
and, with analogous arguments,
$$ \PP(M_{-m,-1}^{Y} =1) =0, \quad m \in \NN.$$

Consequently, regular variation implies that
\begin{align}
\label{rewrite_with_tailprocess}
&\lim_{n \to \infty} \tilde{\theta}_{n}^{-1} \EE\left[\tilde{c}_{n}(-m,m) \one\{M_{-m,-1}^{X} \leq a_{n}\} \mid \| \bm X_0\| > a_{n}\right] \nonumber\\
&=\theta^{-1}\EE\bigg[\exp\bigg \{-\sum_{s \in \mathcal{S}} \sum_{j=-m}^{m} \thresh{g}_u(t,s,v \bm Y_{j}) \bigg\} \cdot \one\{M_{-m,-1}^{Y} \leq 1\}\bigg]. 
\end{align}
Then, by \eqref{tilde_cn_cond_max_rn} and \eqref{rewrite_with_tailprocess},
\begin{align}
\label{result_before_bounded_convergence_1}
 &\lim_{n \to \infty} \EE\left[\tilde{c}_{n}(1,r_n) \mid M_{r_n}^{X} > a_{n} \right]  \nonumber \\
   &= \lim_{m \to \infty} \theta^{-1} \EE\bigg[\exp\bigg \{-\sum_{s \in \mathcal{S}} \sum_{j=-m}^{m} \thresh{g}_u(t,s,v \bm Y_{j}) \bigg\} \cdot \one\{M_{-m,-1}^{Y} \leq 1\}\bigg]
\end{align}
and, rewriting the right-hand side of \eqref{result_before_bounded_convergence_1} further,
\begin{align} \label{result_before_bounded_convergence_2}
 &\lim_{n \to \infty} \EE\left[\tilde{c}_{n}(1,r_n) \mid M_{r_n}^{X} > a_{n}\right]  \nonumber \\
   & ={} \lim_{m \to \infty} \theta^{-1} a_{m,m} 
   ={} \lim_{m \to \infty} \theta^{-1} 
   \left[ a_{m,0}+\sum\nolimits_{q=1}^{m}(a_{m,q}-a_{m,q-1}) \right],
\end{align}
where 
$$a_{m,q}=\EE\bigg[\exp\bigg \{-\sum_{s \in \mathcal{S}} \sum_{j=-q}^{m} \thresh{g}_u(t,s,v \bm Y_{j}) \bigg\} \cdot \one\{M_{-q,-1}^{Y} \leq 1\}\bigg]$$
for  $m \in \mathbb{N}$ and $q \in \{0,\hdots,m\}$.
For fixed $m$ and $q$, we can write $$a_{m,q}-a_{m,q-1}=\EE[g_{m,q}( \bm Y_{-q},\hdots, \bm Y_{m})],$$
where 
\begin{align}
\label{g_m,q}
& g_{m,q}( \bm y_{-q},\hdots,\bm y_{m}) \nonumber\\
={}&  \exp\bigg \{-\sum_{s \in \mathcal{S}} \sum_{j=-q}^{m} \thresh{g}_u(t,s,v \bm y_{j}) \bigg\} \mathds{1}\left\{\max_{-q \leq j \leq -1}\| \bm y_j\| \leq 1 \right\} \nonumber \\
&- \exp\bigg \{-\sum_{s \in \mathcal{S}} \sum_{j=-q+1}^{m} \thresh{g}_u(t,s,v \bm y_{j}) \bigg\} \mathds{1}\left\{\max_{-q+1 \leq j \leq -1}\|\bm y_j\| \leq 1 \right\}.
\end{align}

We now approximate the indicator functions $\one_{(-\infty,1]}$ and $\one_{(1,\infty]}$ by continuous functions $f_n^{-}, f_n^{+}: \RR \to [0,1]$ such that
$f_n^{-} \to \one_{(-\infty,1]}$ and $f_n^{+} \to \one_{(1,\infty)}$ pointwise as $n \to \infty$. 

Then, we apply the bounded convergence theorem to the functions that map $(\bm y_{-q},\ldots,\bm y_m)$ to 
\begin{align*}
& \exp\bigg \{-\sum_{s \in \mathcal{S}} \sum_{j=-q}^{m} g(t,s,v \bm y_{j})f_n^{+}(r^{(s)}( v \bm y_j)) \bigg\} f_n^{-}\bigg(\max_{-q \leq j \leq -1}\| \bm y_j\|\bigg) \nonumber \\
& - \exp\bigg \{-\sum_{s \in \mathcal{S}} \sum_{j=-q+1}^{m} g(t,s,v \bm y_{j})f_n^{+}(r^{(s)}( v \bm y_j)) \bigg\} f_n^{-}\bigg(\max_{-q+1 \leq j \leq -1}\| \bm y_j\|\bigg),
\end{align*}
since these functions converge pointwise to \eqref{g_m,q}, are continuous and uniformly bounded by 1. 
Therefore, the convergence in Theorem 3.1(ii) in \citet{basrak2009regularly} is also true for $g_{m,q}$, that is,
\begin{equation*}
\EE\left[ g_{m,q}\left(\bm Y_{-q},\hdots,\bm Y_{m} \right) \right]=\int_{0}^{\infty}\EE\left[ g_{m,q}\left(z \bm \Theta_{0},\hdots z \bm \Theta_{m+q}\right)\one\{z \| \bm \Theta_{q} \| > 1\}\right]\alpha z^{-\alpha-1} \,\mathrm{d}z,
\end{equation*}
where $\{\bm \Theta_t, \, t \in \ZZ\}$ is the corresponding spectral tail process.
Note that $g_{m,q}(\bm y_{-q},\hdots,\bm y_{m})=0$ if $\|v \bm y_{-q}\| \leq  1$ since, in this case,
$$ r^{(s)}( v \bm y_{-q}) \leq u \|v\bm y_{-q}\|  \leq u$$
for all $s \in \mathcal{S}$ by \eqref{eq:risk-bound}. Therefore,
$$\EE\left[ g_{m,q}\left(z \bm \Theta_{0},\hdots z \bm \Theta_{m+q}\right)\one\{z \| \bm \Theta_{q} \| > 1\} \right]=0$$
if $v z \| \bm \Theta_{0} \| \leq 1$.
Since $\PP(\|\bm \Theta_{0}\|=1)=1$, it follows
$$\int_{0}^{b}\EE\left[ g_{m,q}\left(z \bm \Theta_{0},\hdots z \bm \Theta_{m+q}\right)\one\{z \| \bm \Theta_{q} \| \leq 1\}\right]\alpha z^{-\alpha-1} \,\mathrm{d}z = 0 $$
for all $b \in [0,1/v]$. In particular,
$$\int_{0}^{1}\EE\left[ g_{m,q}\left(z \bm \Theta_{0},\hdots z \bm \Theta_{m+q}\right)\one\{z \| \bm \Theta_{q} \| \leq 1\}\right]\alpha z^{-\alpha-1} \,\mathrm{d}z = 0 $$
and, consequently,
\begin{align*}
 &a_{m,q}-a_{m,q-1}=\EE\left[ g_{m,q}\left(\bm Y_{-q},\hdots,\bm Y_{m} \right) \right] \nonumber \\
 ={}&\int_{0}^{\infty}\EE\left[ g_{m,q}\left(z \bm \Theta_{0},\hdots z \bm \Theta_{m+q}\right)\one\{z \| \bm \Theta_{q} \| > 1\}\right]\alpha z^{-\alpha-1} \,\mathrm{d}z\nonumber \\
 ={}&\int_{1}^{\infty}\EE\left[ g_{m,q}\left(z \bm \Theta_{0},\hdots z \bm \Theta_{m+q}\right)\one\{z \| \bm \Theta_{q} \| > 1\}\right]\alpha z^{-\alpha-1} \,\mathrm{d}z \\
={}& \int_{1}^{\infty} \EE\bigg[\exp\bigg \{-\sum_{s \in \mathcal{S}} \sum_{j=0}^{m+q} \thresh{g}_u(t,s,vz \bm \Theta_{j}) \bigg\} \cdot \one\{M_{0,q-1}^{z\Theta} \leq 1 < z \| \bm \Theta_{q} \|\}\bigg] \alpha z^{-\alpha-1}\,\mathrm{d}z\\
&-\int_{1}^{\infty} \EE\bigg[\exp\bigg \{-\sum_{s \in \mathcal{S}} \sum_{j=1}^{m+q} \thresh{g}_u(t,s,vz \bm \Theta_{j})\bigg\} \cdot \one\{ M_{1,q-1}^{z\Theta} \leq 1 < z \| \bm \Theta_{q} \| \}\bigg]  \alpha z^{-\alpha-1} \,\mathrm{d}z.
\end{align*}

Arguing similarly to the proof of Theorem 4.3 in \citet{basrak2009regularly}, we can approximate $a_{m,q}-a_{m,q-1}$ by
\begin{align*}
&b_{m,q} \\
={}& \int_{1}^{\infty} \EE\bigg[\exp\bigg \{-\sum_{s \in \mathcal{S}} \sum_{j=0}^{m} \thresh{g}_u(t,s,vz \bm \Theta_{j}) \bigg\} \cdot \one\{M_{0,q-1}^{z\Theta} \leq 1 < z \| \bm \Theta_{q} \|\}\bigg] \alpha z^{-\alpha-1}\,\mathrm{d}z\\
&-\int_{1}^{\infty} \EE\bigg[\exp\bigg \{-\sum_{s \in \mathcal{S}} \sum_{j=1}^{m} \thresh{g}_u(t,s,vz \bm \Theta_{j}) \bigg\} \cdot \one\{ M_{1,q-1}^{z\Theta} \leq 1 < z \| \bm \Theta_{q} \| \}\bigg]  \alpha z^{-\alpha-1} \,\mathrm{d}z.
\end{align*}
More precisely, the equality
$$\sum_{s \in \mathcal{S}} \sum_{j=l}^{m+q} \thresh{g}_u(t,s,vz \bm \Theta_{j})=\sum_{s \in \mathcal{S}} \sum_{j=l}^{m} \thresh{g}_u(t,s,vz \bm \Theta_{j})$$
holds for $l \in \{0,1\}$ except if $z \|\Theta_{j}\|>1/v$ for some $j \geq m+1$.
Therefore,
\begin{align*}
&\bigg|(a_{m,q}-a_{m,q-1})- b_{m,q}\bigg|\\
\leq{}& \int_{1}^{\infty} \PP(M_{m+1,\infty}^{z\Theta}>1/v, M_{0,q-1}^{z\Theta} \leq 1 < z \| \bm \Theta_{q} \|) \alpha z^{-\alpha-1}\,\mathrm{d}z \\
&+\int_{1}^{\infty}\PP(M_{m+1,\infty}^{z\Theta}>1/v, M_{1,q-1}^{z\Theta} \leq 1 < z \| \bm \Theta_{q} \|) \alpha z^{-\alpha-1}\,\mathrm{d}z \\
\leq{}& 2 \int_{1}^{\infty} \PP(M_{m+1,\infty}^{z\Theta}>1/v, M_{1,q-1}^{z\Theta} \leq 1 < z \| \bm \Theta_{q} \|) \alpha z^{-\alpha-1}\,\mathrm{d}z.
\end{align*}

Thus, 
\begin{align*}
    &\lim_{m \to \infty} \bigg|\sum_{q=1}^{m} (a_{m,q}-a_{m,q-1})-\sum_{q=1}^{m} b_{m,q}\bigg| \\
   \leq{}&  \lim_{m \to \infty} 2 \int_{1}^{\infty} \PP(M_{m+1,\infty}^{z\Theta}>1/v) \alpha z^{-\alpha-1}\,\mathrm{d}z \\
     ={}&  \lim_{m \to \infty} 2 \PP(M_{m+1,\infty}^{Y}>1/v) 
    = 2\PP\left(\bigcap\nolimits_{m=1}^\infty \{M_{m+1,\infty}^Y > 1/v\} \right) \\
    ={}& 2\PP\left( \limsup_{m \to \infty} \bm Y_m \geq 1/v\right) = 0,
\end{align*}
as $\bm Y_m \to 0$ a.s.~by Proposition 4.2 in \citet{basrak2009regularly}. Together with~\eqref{result_before_bounded_convergence_2}, this implies
\begin{equation} \label{result_after_bounded_convergence}
 \lim_{n \to \infty} \EE\left[\tilde{c}_{n}(1,r_n) \mid M_{r_n}^{X} > a_n\right]  
    ={} \lim_{m \to \infty}  \theta^{-1} 
   \left[ a_{m,0}+\sum\nolimits_{q=1}^{m} b_{m,q} \right]
\end{equation}
Since $\PP(\|\bm \Theta_{0}\|=1)=1,$
\begin{align*}
 &a_{m,0}= \EE\bigg[\exp\bigg \{-\sum_{s \in \mathcal{S}} \sum_{j=0}^{m} 
 \thresh{g}_u(t,s,v \bm Y_{j}) \bigg\} \bigg] \nonumber\\ 
 &= \int_{1}^{\infty} \EE\bigg[\exp\bigg \{-\sum_{s \in \mathcal{S}} \sum_{j=0}^{m} \thresh{g}_u(t,s,vz \bm \Theta_{j}) \bigg\} \bigg] \alpha z^{-\alpha-1} \,\mathrm{d}z \nonumber\\ 
 &= \int_{1}^{\infty} \EE\bigg[\exp\bigg \{-\sum_{s \in \mathcal{S}} \sum_{j=0}^{m} \thresh{g}_u(t,s,vz \bm \Theta_{j})\bigg\} \cdot \one\{z \|\bm \Theta_{0}\| > 1\}\bigg] \alpha z^{-\alpha-1} \,\mathrm{d}z
\end{align*}
Therefore, with \eqref{result_after_bounded_convergence},
\begin{align*}
& \lim_{n \to \infty} \EE\left[\tilde{c}_{n}(1,r_n) \mid M_{r_n}^{X} > a_n \right] \\
={}& \lim_{m \to \infty} \theta^{-1} \bigg( \int_{1}^{\infty} \EE\bigg[\exp\bigg \{-\sum_{s \in \mathcal{S}} \sum_{j=0}^{m} \thresh{g}_u(t,s,vz \bm \Theta_{j}) \bigg\} \cdot \one\{z M_{0,m}^{\Theta} > 1\}\bigg] \alpha z^{-\alpha-1} \,\mathrm{d}z\\
&\qquad \qquad -\int_{1}^{\infty} \EE\bigg[\exp\bigg \{-\sum_{s \in \mathcal{S}} \sum_{j=1}^{m} \thresh{g}_u(t,s,vz \bm \Theta_{j}) \bigg\} \cdot \one\{z M_{1,m}^{\Theta} > 1\}\bigg] \alpha z^{-\alpha-1} \,\mathrm{d}z \bigg).
\end{align*}
To calculate the limit, let 
\begin{align*}
f_{m}(z)={}& \EE\bigg[\exp\bigg \{-\sum_{s \in \mathcal{S}} \sum_{j=0}^{m} \thresh{g}_u(t,s,vz \bm \Theta_{j})\bigg\} \cdot \one\{M_{0,m}^{z \Theta} > 1\}\bigg] \alpha z^{-\alpha-1}\\
&- \EE\bigg[\exp\bigg \{-\sum_{s \in \mathcal{S}} \sum_{j=1}^{m} \thresh{g}_u(t,s,vz \bm \Theta_{j}) \bigg\} \cdot \one\{M_{1,m}^{z \Theta} > 1\}\bigg] \alpha z^{-\alpha-1}.
\end{align*}
As
\begin{align*}
&\bigg|\EE\bigg[\exp\bigg\{-\sum_{s \in \mathcal{S}} \sum_{j=0}^{m} \thresh{g}_u(t,s,vz \bm \Theta_{j}) \bigg\} \cdot \one\{M_{0,m}^{z\Theta} > 1\}\bigg] \\
&- \EE\bigg[\exp\bigg \{-\sum_{s \in \mathcal{S}} \sum_{j=1}^{m} \thresh{g}_u(t,s,v z\bm \Theta_{j})\bigg\} \cdot \one\{M_{1,m}^{z\Theta} > 1\}\bigg] \bigg| \leq 1,
\end{align*}
we have $|f_{m}(r)| \leq \alpha z^{-\alpha-1}$ and 
$\int_{1}^{\infty}\alpha z^{-\alpha-1} = 1 < \infty.$
Consequently, we can apply the dominated convergence theorem twice to obtain
\begin{align} \label{result_after_dominated_convergence}
    & \lim_{n \to \infty} \EE\left[\tilde{c}_{n}(1,r_n) \mid M_{r_n}^{X} > a_n \right]  \nonumber \\
    ={}& \theta^{-1} \int_{1}^{\infty} \EE\bigg[\exp\bigg \{-\sum_{s \in \mathcal{S}} \sum_{j=0}^{\infty} \thresh{g}_u(t,s,vz \bm \Theta_{j}) \bigg\} \cdot 
    \one\{ M_{0,\infty}^{z \Theta} > 1\}\bigg] \alpha z^{-\alpha-1} \,\mathrm{d}z \nonumber\\
    &- \theta^{-1} \int_{1}^{\infty} \EE\bigg[\exp\bigg \{-\sum_{s \in \mathcal{S}} \sum_{j=1}^{\infty} \thresh{g}_u(t,s,vz \bm \Theta_{j})\bigg\} \cdot \one\{M_{1,\infty}^{z \Theta} > 1\}\bigg] \alpha z^{-\alpha-1} \,\mathrm{d}z \nonumber \displaybreak[0] \\
    ={}& \theta^{-1} \EE\bigg[\exp\bigg \{-\sum_{s \in \mathcal{S}} \sum_{j=0}^{\infty} \thresh{g}_u(t,s,v \bm Y_{j}) \bigg\} \cdot 
    \one\{ M_{0,\infty}^{Y} > 1\}\bigg] \nonumber\\
    &- \theta^{-1} \EE\bigg[\exp\bigg \{-\sum_{s \in \mathcal{S}} \sum_{j=1}^{\infty} \thresh{g}_u(t,s,v \bm Y_{j})\bigg\} \cdot \one\{M_{1,\infty}^{Y} > 1\}\bigg] \nonumber \\
    ={}& \theta^{-1} \EE\bigg[\exp\bigg\{-\sum_{s \in \mathcal{S}} \sum_{j=0}^{\infty} \thresh{g}_u(t,s,v \bm Y_{j}) \bigg\}\bigg] \nonumber\\
    &- \theta^{-1} \EE\bigg[\exp\bigg \{-\sum_{s \in \mathcal{S}} \sum_{j=1}^{\infty} \thresh{g}_u(t,s,v \bm Y_{j})\bigg\} \cdot \one\{M_{1,\infty}^{Y} > 1\}\bigg], 
\end{align}
where the last equality holds because
$$ M_{0,\infty}^{Y} \geq \|\bm Y_0\| > 1 $$
with probability one. If $M_{1,\infty}^{Y} \leq 1$, then, for all $j \in \{1,\hdots, \infty\}$,
we have $\|\bm Y_j\| \leq 1$ which implies
$$ r^{(s)}( v \bm Y_j) =  v  r^{(s)}(\bm Y_j) \leq u v \|\bm Y_j\| \leq u v \leq u  $$
for all $s \in \mathcal{S}$ and, consequently,
$\thresh{g}_u(t,s,v \bm Y_{j})=0$. Therefore, with the last identity in \eqref{extremal_index},
\begin{align*}  
&\EE\bigg[\exp\bigg\{-\sum_{s \in \mathcal{S}} \sum_{j=1}^{\infty} \thresh{g}_u(t,s,v \bm Y_{j})\bigg\} \cdot \one\{M_{1,\infty}^{Y} \leq 1\}\bigg] = \PP(M_{1,\infty}^{Y} \leq 1)=\theta.
\end{align*}
Plugging this identity into \eqref{result_after_dominated_convergence}, we obtain
\begin{align*}
    & \lim_{n \to \infty} \EE\left[\tilde{c}_{n}(1,r_n) \mid M_{r_n}^{X} > a_n\right]  \\
     ={}& \theta^{-1} \bigg(\EE\bigg[\exp\bigg \{-\sum_{s \in \mathcal{S}} \sum_{j=0}^{\infty}\thresh{g}_u(t,s,v \bm Y_{j}) \bigg\} \bigg] 
    - \EE\bigg[\exp\bigg \{-\sum_{s \in \mathcal{S}} \sum_{j=1}^{\infty} \thresh{g}_u(t,s,v \bm Y_{j}) \bigg\} \bigg]\bigg) \nonumber\\
     &+ \theta^{-1} \EE\bigg[\exp\bigg \{-\sum_{s \in \mathcal{S}} 
     \sum_{j=1}^{\infty} \thresh{g}_u(t,s,v \bm Y_{j}) \bigg\}  \cdot \one\{M_{1,\infty}^{Y} \leq  1\}\bigg] \displaybreak[0] \nonumber\\
   ={}&1-\theta^{-1} \EE\bigg[\exp\bigg \{-\sum_{s \in \mathcal{S}} \sum_{j=1}^{\infty} \thresh{g}_u(t,s,v \bm Y_{j}) \bigg\}- \exp\bigg\{-\sum_{s \in \mathcal{S}} \sum_{j=0}^{\infty} \thresh{g}_u(t,s,v \bm Y_{j}) \bigg\}\bigg],
\end{align*}
that is \eqref{eq:final-statement}, which closes the proof.
\end{proof}

The following lemma and its proof are based on Theorem 4.2 in 
\citet{hsing1988exceedance} and Lemma \ref{Theorem_sim_cond_4.3}. 
We derive the weak limit of the point processes $N_n(u)$ in terms of Laplace functionals.

\begin{lem}
\label{thm_laplacefunctional_neu}
Let $\{\bm X_j, \,  j \in \ZZ\}$ be a non-negative, strictly stationary and jointly regularly varying time series with tail process $Y=\{\bm Y_j, \, j \in \ZZ\}$ and spectral tail process $\{\bm \Theta_{j}, \, j \in \mathbb{Z}\}$
such that Condition \ref{con_1} and Condition \ref{con_2} hold. 
Furthermore, let $u>0$ be chosen such that \eqref{eq:risk-bound} holds. Then, the point process $N_{n}(u)$ given in \eqref{N_n} converges weakly to a point process $N(u)$ on $[0,1] \times \mathcal{X}_{u} \subset [0,1] \times \mathcal{S} \times \overline{E}_0$ as $n \to \infty$. The Laplace functional of the limit process $N(u)$ is given by
\begin{align}
    \Psi_{N(u)}(g) &=  \exp\bigg(-\int_{0}^{1}  \EE\bigg[\exp\bigg\{-\sum_{s \in \mathcal{S}} \sum_{j=1}^{\infty}\thresh{g}_u(t,s,\bm Y_{j})\bigg\} \nonumber \\
	& \qquad \qquad \qquad \qquad \quad -\exp\bigg\{-\sum_{s \in \mathcal{S}} \sum_{j=0}^{\infty}\thresh{g}_u(t,s,\bm Y_{j}) \bigg\}\bigg] \,\mathrm{d}t \bigg),
    \label{eq:laplace-tailprocess}
\end{align}
with $g \in C_K^+([0,1] \times \mathcal{S} \times \overline{E}_0)$.
It can be rewritten in terms of the spectral tail process as
\begin{align}
  \Psi_{N(u)}(g) & = \exp\bigg(-\int_{0}^{1} \int_{0}^{\infty}\EE\bigg[  
\exp\bigg \{-\sum_{s \in \mathcal{S}} \sum_{j=1}^{\infty} \thresh{g}_u(t,s,v \bm \Theta_{j}) \bigg\} \nonumber \\
&\hspace*{35mm}- \exp\bigg\{-\sum_{s \in \mathcal{S}} \sum_{j=0}^{\infty} \thresh{g}_u(t,s,v \bm \Theta_{j}) \bigg\} \bigg]\alpha v^{-\alpha-1} \,\mathrm{d}v \,\mathrm{d}t \bigg). 
    \label{eq:laplace-spectraltailprocess}
\end{align}
\end{lem}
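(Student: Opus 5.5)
Fix $g \in C_K^+([0,1]\times\mathcal{S}\times\overline{E}_0)$ and write $f=\thresh{g}_u$. This function is non-negative and bounded. Its support lies in $[0,1]\times\mathcal{S}\times\{\|\bm x\|\geq 1\}$ by \eqref{eq:risk-bound}, so it is compactly supported. Condition~\ref{con_1} therefore applies and gives $\Psi_{N_n(u)}(g)-d_n(f)\to 0$. By stationarity the $i$-th factor of $d_n(f)$ equals $\EE[\exp\{-\sum_s\sum_{j=1}^{r_n} f((i-1)r_n/n + j/n,s,a_n^{-1}\bm X_j)\}]$. We then use $\log d_n(f)=\sum_i\log(1-(1-\text{factor}_i))$ together with $1-\text{factor}_i\le \PP(M_{r_n}^X>a_n)\to 0$, uniformly in $i$. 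This yields $-\log d_n(f)\sim\sum_{i=1}^{k_n}(1-\text{factor}_i)$. Each summand $1-\text{factor}_i$ vanishes unless $M_{r_n}^X>a_n$, so it equals $\PP(M_{r_n}^X>a_n)\,(1-\EE[\,\cdot\mid M_{r_n}^X>a_n])$. Since $k_n\PP(M_{r_n}^X>a_n)=k_nr_n\PP(\|\bm X_0\|>a_n)\theta_n\to\theta$ by \eqref{a_n_sequence} and Proposition 4.2 of \citet{basrak2009regularly}, the problem reduces to identifying
$$\lim_{n\to\infty}\frac{1}{k_n}\sum_{i=1}^{k_n}\Big(1-\EE\Big[e^{-\sum_s\sum_{j=1}^{r_n}\thresh{g}_u((i-1)r_n/n+j/n,s,a_n^{-1}\bm X_j)}\,\Big|\,M_{r_n}^X>a_n\Big]\Big).$$

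The next step removes the time argument inside the block. We replace $(i-1)r_n/n+j/n$ by $t_i:=ir_n/n$. The function $g$ is uniformly continuous because it has compact support. Hence $|g(t_i+\epsilon,s,\bm x)-g(t_i,s,\bm x)|\le\omega(r_n/n)\to0$ for all $|\epsilon|\le r_n/n$, and the indicator in $\thresh{g}_u$ does not depend on $t$. The exponent therefore changes by at most $\omega(r_n/n)\cdot|\mathcal{S}|\cdot\#\{j\le r_n:\|\bm X_j\|>a_n\}$. We need this to be negligible under the conditional law given $M_{r_n}^X>a_n$. The number of exceedances there has bounded expectation: it equals $r_n\PP(\|\bm X_0\|>a_n)/\PP(M_{r_n}^X>a_n)=\theta_n^{-1}\to\theta^{-1}$. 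Using $|e^{-a}-e^{-b}|\le|a-b|$, the error is $O(\omega(r_n/n))$ uniformly in $i$. Lemma~\ref{Theorem_sim_cond_4.3} with $v=1$ then gives the limit of the conditional expectation at a fixed $t$, namely $1-\theta^{-1}h(t)$ with
$$h(t)=\EE\Big[e^{-\sum_s\sum_{j\ge1}\thresh{g}_u(t,s,\bm Y_j)}-e^{-\sum_s\sum_{j\ge0}\thresh{g}_u(t,s,\bm Y_j)}\Big].$$

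I expect the main obstacle to be passing from this pointwise statement to the Riemann sum $k_n^{-1}\sum_i\theta^{-1}h(t_i)\to\theta^{-1}\int_0^1h(t)\,\mathrm{d}t$. That passage needs convergence that is uniform in $t$, or at least an equicontinuity argument. My plan is to show that both the prelimit quantity and $h$ are Lipschitz-type in $t$ with modulus controlled by $\omega$. For the prelimit this follows from the exceedance-count bound above, and for $h$ it follows from the same bound with $\EE[\#\{j:\|\bm Y_j\|>1\}]$, which is finite because it equals the limit of the cluster sizes, of order $\theta^{-1}$. The prelimit functions are then equicontinuous in $t$ and converge pointwise on $[0,1]$, so by compactness they converge uniformly. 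This gives $-\log\Psi_{N_n(u)}(g)\to\int_0^1h(t)\,\mathrm{d}t$, which is \eqref{eq:laplace-tailprocess}. A small check is required at this point: the limit is a Laplace functional of some point process. This holds because it is the Laplace functional of the explicit Poisson cluster construction, or alternatively by tightness, which follows from the bounded expected number of points in $\{\|\bm x\|\ge1\}$. Proposition 11.1.VIII of \citet{daley2008introduction} then yields weak convergence.

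Finally, \eqref{eq:laplace-spectraltailprocess} follows by substituting $\bm Y_j=P_\alpha\bm\Theta_j$ and conditioning on $P_\alpha=v$. This gives the $\alpha v^{-\alpha-1}$ integral over $v>1$. On $v\le1$ the integrand vanishes. Indeed, $\|v\bm\Theta_0\|\le1$ forces $\thresh{g}_u(t,s,v\bm\Theta_0)=0$ by \eqref{eq:risk-bound}, so the two exponentials coincide. Extending the integral to $(0,\infty)$ therefore changes nothing.
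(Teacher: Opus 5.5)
Your proof is correct and follows the paper's skeleton. Condition~\ref{con_1} reduces $\Psi_{N_n(u)}(g)$ to a product of block factors. The logarithm is then linearised, using that $1$ minus each factor is at most $\PP(M_{r_n}^X>a_n)$ while $k_n\PP(M_{r_n}^X>a_n)\to\theta$. Finally, Lemma~\ref{Theorem_sim_cond_4.3} with $v=1$ identifies the conditional block expectation at fixed $t$. The differences lie in four details.

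\emph{Removing the within-block time dependence.} The paper sandwiches $g(j/n,\cdot)$ between $(g(t,\cdot)-\varepsilon)_+$ and $(g(t,\cdot)-\varepsilon)_++2\varepsilon$. It applies Lemma~\ref{Theorem_sim_cond_4.3} to both bounds, patching the upper one to be compactly supported, and lets $\varepsilon\to0$. You instead get a pre-limit bound $|\mathcal S|\,\omega(r_n/n)\,\theta_n^{-1}$ from the identity $\EE[\#\{j\le r_n:\|\bm X_j\|>a_n\}\mid M_{r_n}^X>a_n]=\theta_n^{-1}$ and $|e^{-a}-e^{-b}|\le|a-b|$. This is shorter, quantitative and uniform in the block index.

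\emph{Passing to the integral.} Your ``main obstacle'' does not arise in the paper. It writes the product as $\exp(\frac{n}{r_n}\int_0^1\log(1-R_n(t))\,\mathrm dt)$ with a step function $R_n$. It then inserts the running variable $t$ itself into each block (the function $\tilde R_n$) and concludes by pointwise use of Lemma~\ref{Theorem_sim_cond_4.3} plus dominated convergence. Had you replaced $(i-1)r_n/n+j/n$ by $t\in((i-1)r_n/n,ir_n/n]$ instead of by $t_i$, your count bound would give the same uniform error and you could skip equicontinuity. Your Riemann-sum route is nonetheless valid.

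\emph{Spectral representation.} For \eqref{eq:laplace-spectraltailprocess}, you use $\bm Y=P_\alpha\bm\Theta$ with independence directly. This is simpler than the paper's detour through Theorem~3.1(ii) of \citet{basrak2009regularly}.

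\emph{Limit is a Laplace functional.} Your explicit tightness check fills a point the paper passes over. All points of $N_n(u)$ lie in $\{\|\bm x\|>1\}$, and their expected number is at most $|\mathcal S|\,n\PP(\|\bm X_0\|>a_n)\to|\mathcal S|$.

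One side remark is wrong, though it is not load-bearing. $\EE[\#\{j:\|\bm Y_j\|>1\}]$ is not the mean cluster size $\theta^{-1}$. That value is the mean of $K=\#\{j:\|\bm Z_j\|>1\}$ for the anchored cluster in \eqref{C}. By the time-change formula, $\#\{j\in\ZZ:\|\bm Y_j\|>1\}$ has the size-biased law of $K$, so its mean is $\theta\,\EE[K^2]$. This is infinite whenever $K$ has infinite variance, which Condition~\ref{con_2} does not exclude.

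You do not need the claim. The pointwise limit of your equicontinuous family inherits the modulus $|\mathcal S|\,\theta^{-1}\omega$, so continuity of $h$ comes for free. Alternatively, use bounded convergence together with the a.s.\ finiteness of the count.
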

\begin{proof}
Let $R_n$ be a step function on $(0,1)$ defined by
$$R_n(t) = \left\{
\begin{array}{ll}
1-\EE\exp\left(-\sum_{s}\sum_{j=(i-1)r_n+1}^{ir_n} \thresh{g}_u(j/n,s,a_{n}^{-1} \bm X_{j})\right), & t \in T_1^{(i)}, 1 \leq i \leq k_n, \\
0, & \, t=0 \text{ or } t \in T_2, \\
\end{array}
\right. $$
where $T_1^{(i)}=\left(\frac{(i-1)r_n}{n},\frac{ir_n}{n}\right]$ and $T_2=\left(\frac{k_nr_n}{n},1\right]$.
In addition, let
$$\tilde{R}_n(t) = \left\{
\begin{array}{ll}
1-\EE\exp\left(-\sum_{s}\sum_{j=(i-1)r_n+1}^{ir_n} \thresh{g}_u(t,s,a_{n}^{-1} \bm X_{j})\right), & t \in T_1^{(i)}, 1 \leq i \leq k_n, \\
0, & \, t=0 \text{ or } t \in T_2. \\
\end{array}
\right. $$
Then, by stationarity, it holds $$\tilde{R}_n(t)=1-\EE\exp\left(- \sum_s \sum_{j=1}^{r_n} \thresh{g}_u(t,s,a_{n}^{-1} \bm X_{j})\right)$$  for $0 < t \leq \frac{k_n r_n}{n}$. 
Using also Condition \ref{con_1}, we obtain
\begin{align*} 
& \lim_{n \to \infty} \EE \bigg[ \exp\bigg\{ -\sum_{s \in \mathcal{S}} \sum_{j=1}^{n} \thresh{g}_u(j/n,s,a_{n}^{-1} \bm X_{j}) \bigg\} \bigg] \\
&= \lim_{n \to \infty} \prod_{i=1}^{k_n}\EE \bigg[ \exp\bigg\{ -\sum_{s \in \mathcal{S}} \sum_{j=(i-1)r_n+1}^{ir_n} \thresh{g}_u(j/n,s,a_{n}^{-1} \bm X_{j}) \bigg\} \bigg] \\
&= \lim_{n \to \infty} \exp\bigg(\sum_{i=1}^{k_n}\log\bigg(\EE \bigg[ \exp\bigg\{ -\sum_{s \in \mathcal{S}} \sum_{j=(i-1)r_n+1}^{ir_n} \thresh{g}_u(j/n,s,a_{n}^{-1} \bm X_{j}) \bigg\} \bigg]\bigg) \bigg) \displaybreak[0] \\
&= \lim_{n \to \infty} \exp\bigg(\frac{n}{r_n}\sum_{i=1}^{k_n} \frac{r_n}{n}\log\bigg(1-\bigg(1-\EE \bigg[ \exp\bigg\{ -\sum_{s \in \mathcal{S}} \sum_{j=(i-1)r_n+1}^{ir_n} \thresh{g}_u(j/n,s,a_{n}^{-1} \bm X_{j}) \bigg\} \bigg]\bigg)\bigg) \bigg) \\
&=  \lim_{n \to \infty} \exp\bigg(\frac{n}{r_n}\int_{0}^{1} \log\bigg(1-R_n(t)\bigg) \,\mathrm{d}t \bigg).
\end{align*}

Condition \ref{con_2} implies Proposition 4.2 of \citet{basrak2009regularly}, which gives 
$$\theta=\lim_{n \to \infty} \frac{\PP(M_{r_n}^{X}>a_{n}
)}{r_n\PP( \| \bm X_0\|> a_n)},$$
and thus by \eqref{a_n_sequence}, it is
\begin{align}
\label{theta_limes_finite}
\lim_{n \to \infty} \frac{n}{r_n} \PP(M_{r_n}^{X}> a_{n})
={}&\lim_{n \to \infty} n \PP( \| \bm X_0\|> a_n) \cdot \frac{\PP(M_{r_n}^{X}> a_{n}
)}{r_n\PP( \| \bm X_0\|> a_{n})} = \theta < \infty.
\end{align}
Since $\{\bm X_j, \,  j \in \ZZ\}$ is stationary, it holds
\begin{align}
\label{M_r_n>a_n}
 \PP(M_{r_n}^{X}>a_{n}) ={}& \PP\bigg(\sum_{s \in \mathcal{S}}  \sum_{j=1}^{r_n} \one\{X_j(s) > a_{n}\} >0\bigg) \nonumber \\
 ={}&   \PP\bigg(\sum_{s \in \mathcal{S}}  \sum_{j=(i-1)r_n+1}^{ir_n} \one\{X_j(s) > a_{n}\} >0\bigg).
\end{align}
We note that, by definition, $\thresh{g}_u(j/n,s,a_n^{-1} \bm X_j)>0$ only if 
    $r_s(\bm X_j) > a_nu$, which also implies $\|\bm X_j\| > a_n$. Thus, we can conclude from \eqref{theta_limes_finite} and \eqref{M_r_n>a_n} that $\frac{n}{r_n}R_{n}(t)$ is uniformly bounded in $t$  because 
\begin{align}
\label{n_rn_R_n(t)}
 &\frac{n}{r_n}R_{n}(t) \nonumber \\
 ={}& \frac{n}{r_n} \EE\bigg( \bigg[1-\exp\bigg(- \sum_{s \in \mathcal{S}} \sum_{j=(i-1)r_n+1}^{ir_n} \thresh{g}_u(j/n,s,a_{n}^{-1} \bm X_{j}) \bigg) \bigg] \cdot \nonumber\\
 & \hspace*{40mm} \one\bigg\{\sum_{s \in \mathcal{S}}  \sum_{j=(i-1)r_n+1}^{ir_n} \one\{\|\bm X_j\| > a_n\} >0 \bigg\}\bigg)  \nonumber \\
={}& \frac{n}{r_n} \PP(M_{r_n}^{X}>a_{n}) \cdot \nonumber \\
& \quad \bigg(1-\EE\bigg(\exp\bigg(- \sum_{s \in \mathcal{S}} \sum_{j=(i-1)r_n+1}^{ir_n} \thresh{g}_u(j/n,s,a_{n}^{-1} \bm X_{j}) \bigg) \, \bigg| \, M_{r_n}^{X}> a_{n}\bigg) \bigg) \nonumber \\
 \leq{}& \frac{n}{r_n} \PP(M_{r_n}^{X}>a_{n}) \to \theta < \infty.
\end{align}
Using the fact that $r_n/n \to 0$, this implies that
\begin{align*}
 0 \leq R_{n}(t) = \frac{r_n}{n} \frac{n}{r_n} R_{n}(t) \leq \frac{r_n}{n}  \frac{n}{r_n} \PP(M_{r_n}^{X} > a_{n})\to 0, \quad  n \to \infty,
\end{align*}
i.e., $R_{n}(t) \to 0$ uniformly in $t$. 

Then, as in the proof of Theorem 4.2 in \citet{hsing1988exceedance}, we define $$\psi(x)=-\log(1-x)-x, \quad x \in [0,1),$$ such that
$\psi(x) \sim \frac{x^{2}}{2}$ as $x \to \infty$.
Therefore, for large $n$ it holds
$$|\psi\left(R_{n}(t)\right)| \leq R_{n}^{2}(t) \text{ for all } t \in [0,1].$$

Using again \eqref{n_rn_R_n(t)} and $r_n/n \to 0$, this implies
$$\frac{n}{r_n} \int_{0}^{1} |\psi\left(R_{n}(t)\right)| \,\mathrm{d}t \leq \frac{r_n}{n} \int_{0}^{1} \left( \frac{n}{r_n} R_{n}(t)\right)^{2} \,\mathrm{d}t \to 0,  \quad  n \to \infty,$$
and consequently, 
$$\lim_{n \to \infty} \exp\left(\frac{n}{r_n}\int_{0}^{1} \log\left(1-R_n(t)\right) \,\mathrm{d}t \right) = \lim_{n \to \infty} \exp\left(-\frac{n}{r_n}\int_{0}^{1} R_n(t) \,\mathrm{d}t \right).$$

In the next step, we show that
$$\lim_{n \to \infty} \frac{n}{r_n}|R_{n}(t)- \tilde{R}_n(t)|=0 \quad \text{for all } t \in [0,1].$$
As the support of $g$ is compact, the continuous function $g$ is also uniformly continuous. Consequently, for any $\varepsilon > 0$, there exists some $\delta > 0$ such that
$$ |g(t_1, s, \bm x) - g(t_2,s, \bm x) | < \varepsilon $$
for all $t_1,t_2 \in [0,1]$, $s \in \mathcal{S}$, $\bm  x \in \overline{E}_0$ with 
$|t_1 - t_2| < \delta$. Choosing $n$ sufficiently large such that $r_n / n < \delta$ guarantees that
\begin{align*}
(g(t, s, a_n^{-1} \bm X_j) - \varepsilon)_+ \leq{}& g(t, s, a_n^{-1} \bm X_j) \\
\leq{}& g(t, s, a_n^{-1} \bm X_j) + \varepsilon \leq{} (g(t, s, a_n^{-1} \bm X_j) - \varepsilon)_+ + 2 \varepsilon\\
\text{and }
(g(t, s, a_n^{-1} \bm X_j) - \varepsilon)_+ \leq{}& g(j/n, s, a_n^{-1} \bm X_j) \\\leq{}& g(t, s, a_n^{-1} \bm X_j) + \varepsilon \leq{} (g(t, s, a_n^{-1} \bm X_j) - \varepsilon)_+ + 2 \varepsilon
\end{align*}
with probability one for all $t \in [0,1]$, $s \in \mathcal{S}$, $j \in \{(i-1)r_n+1, \ldots, i r_n\}$ where $i$ is chosen such that $t \in T_1^{(i)}$ and
consequently $|j/n - t| \leq r_n/n < \delta$.
 Then, 
\begin{align*}
    &\frac{n}{r_n}|R_{n}(t)- \tilde{R}_n(t)| \\
    \leq{}& \frac{n}{r_n} \bigg| \quad \EE\bigg[\exp\bigg(-\sum_{s \in \mathcal{S}} \sum_{j=(i-1)r_n+1}^{ir_n} g(t,s,a_{n}^{-1}\bm X_{j}) \one\{r^{(s)}(\bm X_j) > a_n u\}\bigg)\bigg] \\ 
    &\qquad -\EE\bigg[\exp\bigg(-\sum_{s \in \mathcal{S}}\sum_{j=(i-1)r_n+1}^{ir_n} g(j/n,s,a_{n}^{-1}\bm X_{j}) \one\{r^{(s)}(\bm X_j) > a_n u\} \bigg)\bigg]\bigg| \displaybreak[0]\\
    \leq{}& \frac{n}{r_n} \bigg( \quad \EE\bigg[\exp\bigg(-\sum_{s \in \mathcal{S}} \sum_{j=(i-1)r_n+1}^{ir_n} (g(t,s,a_{n}^{-1}\bm X_{j})-\varepsilon)_+ \one\{r^{(s)}(\bm X_j) > a_n u\}\bigg)\bigg] \\ 
    &\qquad -\EE\bigg[\exp\bigg(-\sum_{s \in \mathcal{S}}\sum_{j=(i-1)r_n+1}^{ir_n} [(g(t,s,a_{n}^{-1}\bm X_{j}) - \varepsilon)_+ + 2\varepsilon] \one\{r^{(s)}(\bm X_j) > a_n u\} \bigg)\bigg]\bigg).
\end{align*}
Applying Lemma \ref{Theorem_sim_cond_4.3} with $v=1$ to the functions
$$ g_1(t,s,\bm x) = (g(t,s,\bm x)-\varepsilon)_+ \quad \text{and} \quad
   g_2(t,s,\bm x) = g_1(t,s,\bm x) + 2\varepsilon, $$
where $g_2$ is appropriately modified outside $[0,1] \times \mathcal{X}_{u}$ to become compactly supported while remaining non-negative and continuous, yields that
\begin{align} \label{eq:upper-bound-rndiff}
    &\lim_{n \to \infty} \frac{n}{r_n}|R_{n}(t)- \tilde{R}_n(t)| \nonumber \\
    \leq{}& \lim_{n \to \infty} \frac{n}{r_n} \PP(M_{r_n}^X > a_n) \nonumber \\
    & \cdot \bigg\{  \EE\bigg[\exp\bigg(-\sum_{s \in \mathcal{S}} \sum_{j=(i-1)r_n+1}^{ir_n} \thresh{g_1}_u(t,s,a_{n}^{-1} \bm X_{j}) \bigg)\, \bigg| \, M_{r_n}^X > a_n\bigg]  \nonumber \\
    & \quad - 
    \EE\bigg[\exp\bigg(-\sum_{s \in \mathcal{S}} \sum_{j=(i-1)r_n+1}^{ir_n} \thresh{g_2}_u(t,s,a_{n}^{-1} \bm X_{j}) \bigg)\, \bigg | \, M_{r_n}^X > a_n\bigg] \bigg\} \displaybreak[0] \nonumber \\ 
    ={}&\lim_{n \to \infty} \frac{n}{r_n} \PP(M_{r_n}^X > a_n) \nonumber \\
    & \cdot \bigg(-\theta^{-1} \EE\bigg[\exp\bigg \{-\sum_{s \in \mathcal{S}} \sum_{j=1}^{\infty} \thresh{g_1}_u(t,s,\bm Y_{j}) \bigg\} - \exp\bigg\{-\sum_{s \in \mathcal{S}} \sum_{j=0}^{\infty} \thresh{g_1}_u(t,s,\bm Y_{j}) \bigg\}\bigg] \nonumber \\
    & \qquad + \theta^{-1} \EE\bigg[\exp\bigg(-\sum_{s \in \mathcal{S}} \sum_{j=1}^{\infty} (\thresh{g_1}_u(t,s,\bm Y_{j}) 
    + 2 \varepsilon \one\{r^{(s)}(\bm Y_{j}) > u\})\bigg) \nonumber \\
    & \qquad \qquad \qquad
    - \exp\bigg(-\sum_{s \in \mathcal{S}} \sum_{j=0}^{\infty} (\thresh{g_1}_u(t,s,\bm Y_{j}) + 2 \varepsilon \one\{r^{(s)}(\bm Y_{j}) > u\})\bigg) \bigg] \bigg) \nonumber \\
    ={}& \lim_{n \to \infty} \frac{n}{r_n} \PP(M_{r_n}^X > a_n) 
    \cdot \theta^{-1} \bigg(\EE\bigg[\exp\bigg(-\sum_{s \in \mathcal{S}} \sum_{j=1}^{\infty} \thresh{g_1}_u(t,s,\bm Y_{j})\bigg)\bigg]  \cdot   \nonumber\\
    &\bigg( \EE\bigg[\exp\bigg(-\sum_{s \in \mathcal{S}} \sum_{j=1}^{\infty} 2 \varepsilon \one\{r^{(s)}(
    \bm Y_j) > u\}\bigg)\bigg]-1\bigg)  \nonumber\\
    & - \EE\bigg[\exp\bigg(-\sum_{s \in \mathcal{S}} \sum_{j=0}^{\infty} \thresh{g_1}_u(t,s,\bm Y_{j})\bigg)\bigg] \cdot \nonumber\\
    & \bigg(\EE\bigg[\exp\bigg(-\sum_{s \in \mathcal{S}} \sum_{j=0}^{\infty} 2 \varepsilon \one\{r^{(s)}( \bm Y_j) > u \}\bigg)\bigg]-1\bigg)  \bigg)  \nonumber\\
    ={}& \lim_{n \to \infty} \frac{n}{r_n} \PP(M_{r_n}^X > a_n) 
    \cdot \theta^{-1} \cdot   \nonumber\\
    &\bigg(\EE\bigg[\exp\bigg(-\sum_{s \in \mathcal{S}} \sum_{j=0}^{\infty} \thresh{g_1}_u(t,s,\bm Y_{j})\bigg)\bigg] \bigg( 1-\EE\bigg[\exp\bigg(-\sum_{s \in \mathcal{S}} \sum_{j=0}^{\infty} 2 \varepsilon \one\bigg\{r^{(s)}( \bm Y_j) > u \bigg\}\bigg)\bigg]\bigg)  \nonumber\\
    & - \EE\bigg[\exp\bigg(-\sum_{s \in \mathcal{S}} \sum_{j=1}^{\infty} \thresh{g_1}_u(t,s,\bm Y_{j})\bigg)\bigg] \bigg(1-\EE\bigg[\exp\bigg(-\sum_{s \in \mathcal{S}} \sum_{j=1}^{\infty} 2 \varepsilon \one\bigg\{r^{(s)}( \bm Y_j) > u \bigg\}\bigg)\bigg]\bigg)  \bigg) \displaybreak[0] \nonumber\\
    \leq{}& \lim_{n \to \infty} \frac{n}{r_n} \PP(M_{r_n}^X > a_n) 
    \cdot \theta^{-1} \cdot \EE\bigg[\exp\bigg(-\sum_{s \in \mathcal{S}} \sum_{j=0}^{\infty} \thresh{g_1}_u(t,s,\bm Y_{j})\bigg)\bigg]\cdot  \nonumber \\
    & \qquad \bigg(1-\EE\bigg[\exp\bigg(-2\varepsilon\sum_{s \in \mathcal{S}} \sum_{j=0}^{\infty} \one\bigg\{r^{(s)}(\bm Y_j) > u \bigg\}\bigg)\bigg] \bigg) \displaybreak[0] \nonumber \\
    \leq{}& \EE\bigg[\exp\bigg(-\sum_{s \in \mathcal{S}} \sum_{j=0}^{\infty} \thresh{g_1}_u(t,s,\bm Y_{j})\bigg)\bigg] \cdot \bigg(
    1-\EE\bigg[\exp\bigg(-2\varepsilon|\mathcal{S}| \sum_{j=0}^{\infty} \one\bigg\{\|\bm Y_j\| > 1 \bigg\}\bigg)\bigg]\bigg) \nonumber \\
\end{align}
where we used the identity $\thresh{g_2}_u(t,s,\bm x) = \thresh{g_1}_u(t,s,\bm x) + 2 \varepsilon \one\{r^{(s)}(\bm x) > u\}$ and the fact that $\lim_{n \to \infty} \frac{n}{r_n} \PP(M_{r_n}^X > a_n) = \theta$
by Proposition 4.2 in \citet{basrak2009regularly}.
By the same proposition,
$$ 1=  \PP\Big(\lim_{j \to \infty} \|\bm Y_j\|=0\Big) \leq  \PP\bigg(\sum_{j=0}^{\infty} \one\bigg\{\|\bm Y_j\| > 1 \bigg\}<\infty\bigg),$$
using the fact that $\lim_{j \to \infty}\|\bm Y_j\|=0$ implies
 $  \|\bm Y_j\| > 1$ for only finitely many $j$.
Consequently, the right-hand side of \eqref{eq:upper-bound-rndiff} becomes arbitrarily small as $\varepsilon \to 0$, which allows us to conclude that
\begin{align} \label{difference_R_n_and_tilde_R_n}
    \lim_{n \to \infty} \frac{n}{r_n}|R_{n}(t)- \tilde{R}_n(t)| = 0 \quad \text{for all } t \in [0,1].
\end{align}
Hence, using the dominated convergence theorem with majorant 
$\sup_{n \in \NN} \frac{n}{r_n} \PP(M_{r_n}^{X}>a_{n})$
and \eqref{difference_R_n_and_tilde_R_n}, we finally obtain
\begin{align*}
& \lim_{n \to \infty} \EE \bigg[ \exp\bigg\{ -\sum_{s \in \mathcal{S}} \sum_{j=1}^{n} \thresh{g}_u(j/n,s,a_{n}^{-1} \bm X_{j}) \bigg\} \bigg] \\    
&=\lim_{n \to \infty} \exp\bigg(-\frac{n}{r_n}\int_{0}^{1} R_n(t) \,\mathrm{d}t \bigg)
= \lim_{n \to \infty} \exp\bigg(-\frac{n}{r_n}\int_{0}^{1} \tilde{R}_n(t) \,\mathrm{d}t \bigg).
\end{align*}
Then, 
\begin{align*}
 &\frac{n}{r_n} \tilde{R}_n(t)=  \frac{n}{r_n} \bigg(1-\EE\bigg[\exp\bigg(-\sum_{s \in \mathcal{S}}\sum_{j=1}^{r_n} \thresh{g}_u(t,s,a_{n}^{-1} \bm X_{j})\bigg)\bigg] \bigg) \\
 &= \frac{n}{r_n} \EE\bigg[\bigg(1-\exp\bigg(-\sum_{s \in \mathcal{S}}\sum_{j=1}^{r_n} \thresh{g}_u(t,s,a_{n}^{-1} \bm X_{j})\bigg)\bigg) \mathds{1}\{M_{r_n}^{X} > a_{n}\}\bigg]  \\
 &=  \frac{n}{r_n} \PP(M_{r_n}^{X}>a_{n})
 \bigg(1-\EE\bigg[\exp\bigg(-\sum_{s \in \mathcal{S}}\sum_{j=1}^{r_n} \thresh{g}_u(t,s,a_{n}^{-1} \bm X_{j})\bigg)\, \bigg| \, M_{r_n}^{X}>a_{n}\bigg]\bigg). 
\end{align*}
Let 
\begin{align}
\label{g_t_definition}
g_{t}((\bm y_j)_{j \in \NN_0})
={}&  \exp\bigg \{-\sum_{s \in \mathcal{S}} \sum_{j=1}^{\infty} \thresh{g}_u(t,s,\bm y_{j}) \bigg\}- \exp\bigg\{-\sum_{s \in \mathcal{S}} \sum_{j=0}^{\infty} \thresh{g}_u(t,s,\bm y_{j}) \bigg\}.
\end{align}
Thus, it holds with Equation \eqref{theta_limes_finite} and Lemma \ref{Theorem_sim_cond_4.3} with $v=1$ that
\begin{align*}
&\lim_{n \to \infty}\frac{n}{r_n} \tilde{R}_n(t)
= \theta \left(1-(1-\theta^{-1} \EE\big[g_{t}((\bm Y_j)_{j \in \NN_0}) \big] \right) = \EE\big[g_{t}((\bm Y_j)_{j \in \NN_0}) \big].
\end{align*}
Thus, we obtain
\begin{align*} 
	& \lim_{n \to \infty} \EE \bigg[ \exp\bigg\{ -\sum_{s \in \mathcal{S}} \sum_{j=1}^{n} \thresh{g}_u(j/n,s,a_{n}^{-1} \bm X_{j}) \bigg\} \bigg] \\
	&= \lim_{n \to \infty} \exp\bigg(-\frac{n}{r_n}\int_{0}^{1} \tilde{R}_n(t) \,\mathrm{d}t \bigg) \displaybreak[0] \\
	&= \exp\bigg(-\int_{0}^{1}  \EE\bigg[g_{t}(( \bm Y_t)_{t \in \NN_0}) \bigg] \,\mathrm{d}t \bigg) \\ 
	&=  \exp\bigg(-\int_{0}^{1}  \EE\bigg[\bigg(\exp\bigg(-\sum_{s \in \mathcal{S}} \sum_{j=1}^{\infty}\thresh{g}_u(t,s,\bm Y_{j})\bigg) \\
	& \qquad \qquad \qquad \qquad \quad -\exp\bigg(-\sum_{s \in \mathcal{S}} \sum_{j=0}^{\infty}\thresh{g}_u(t,s,\bm Y_{j}) \bigg) \bigg)\bigg] \,\mathrm{d}t \bigg).
\end{align*}

Thus, we have proven that $N_n(u)$ converges weakly to a point process $N(u)$ whose Laplace functional is given by \eqref{eq:laplace-tailprocess}. In order to prove that \eqref{eq:laplace-tailprocess} is equal to \eqref{eq:laplace-spectraltailprocess}, we now proceed similarly to the proof of Lemma \ref{Theorem_sim_cond_4.3} and approximate the indicator functions $\one_{(1,\infty]}$ by continuous functions $f_n: \RR \to [0,1]$ such that
$f_n \to \one_{(1,\infty)}$ pointwise as $n \to \infty$.
By applying the bounded convergence theorem to the continuous and uniformly bounded functions that map $(\bm y_t)_{t \in \NN_0}$ to 
\begin{align*}
& \exp\bigg \{-\sum_{s \in \mathcal{S}} \sum_{j=1}^{\infty} g(t,s, \bm y_{j})f_n(r^{(s)}( \bm y_j)) \bigg\} \\
 &- \exp\bigg \{-\sum_{s \in \mathcal{S}} \sum_{j=0}^{\infty} g(t,s, \bm y_{j})f_n(r^{(s)}( \bm y_j)) \bigg\},
\end{align*}
we see that the convergence in Theorem 3.1 (ii) in \citet{basrak2009regularly} is also true for $g_t$, that is, 
\begin{align}
\label{Theorem_31_gt}
&\EE\bigg[g_{t}((\bm Y_j)_{j \in \NN_0}) \bigg]
= \int_{1}^{\infty}\EE\left[ g_{t}((v \bm \Theta_j)_{j \in \NN_0})\right]\alpha v^{-\alpha-1} \,\mathrm{d}v \nonumber \\
&{}=\int_{0}^{\infty}\EE\left[ g_{t}((v \bm \Theta_j)_{j \in \NN_0})\right]\alpha v^{-\alpha-1} \,\mathrm{d}v, 
\end{align}
where the last equality holds because $\thresh{g}_u(t,s,v \bm \Theta_{0})=0$ if $$r^{(s)}(v \bm \Theta_{0}) \leq u v \|\bm \Theta_{0}\| \leq u,$$
which is true for $v \in (0,1]$. 
Consequently, it is
\begin{align*} 
& \lim_{n \to \infty} \EE \bigg[ \exp\bigg\{ -\sum_{s \in \mathcal{S}} \sum_{j=1}^{n} \thresh{g}_u(j/n,s,a_{n}^{-1} \bm X_{j}) \bigg\} \bigg] \\
&= \exp\bigg(-\int_{0}^{1}  \EE\bigg[g_{t}(( \bm Y_j)_{j \in \NN_0}) \bigg] \,\mathrm{d}t \bigg) \\
&= \exp\bigg(-\int_{0}^{1} \int_{1}^{\infty}  \EE\left[ g_{t}((v \bm \Theta_j)_{j \in \NN_0})\right]\alpha v^{-\alpha-1} \,\mathrm{d}v   \,\mathrm{d}t \bigg) \\
&= \exp\bigg(-\int_{0}^{1} \int_{0}^{\infty}\EE\left[g_{t}((v \bm \Theta_j)_{j \in \NN_0})\right]\alpha v^{-\alpha-1} \,\mathrm{d}v   \,\mathrm{d}t \bigg) \\
&= \exp\bigg(-\int_{0}^{1} \int_{0}^{\infty}\EE\bigg[      
\exp\bigg \{-\sum_{s \in \mathcal{S}} \sum_{j=1}^{\infty} \thresh{g}_u(t,s,v \bm \Theta_{j}) \bigg\} \\
&\hspace*{40mm}- \exp\bigg\{-\sum_{s \in \mathcal{S}} \sum_{j=0}^{\infty} \thresh{g}_u(t,s,v \bm \Theta_{j}) \bigg\} \bigg]\alpha v^{-\alpha-1} \,\mathrm{d}v   \,\mathrm{d}t \bigg).
\end{align*}
\end{proof}

The two lemmas above now enable us to prove the two main theorems of the paper, i.e., Theorem~\ref{point_process_N_explicit_representation} and Theorem~\ref{thm_laplacefunctional_marked}.

\begin{proof}[Proof of Theorem~\ref{point_process_N_explicit_representation}]
Let $\widetilde{N}(u)$ denote the point process on the right-hand side of Equation~\eqref{eq:cluster-sum}. Then,
\begin{align} \label{eq:pgf-poisson}
 \Psi_{\tilde{N}(u)}(g) ={}& \EE\bigg[ \exp\bigg( - \sum_{i=1}^{T} \sum_{s \in \mathcal{S}}\sum_{j=0}^{\infty} g(V_i,s, \bm Z_{ij}) \mathds{1}\{r^{(s)}(Z_{ij}) > u\} \bigg)\bigg] \nonumber \\ 
 ={}&\EE\bigg[ \exp\bigg( - \sum_{i=1}^{T} \sum_{s \in \mathcal{S}} \sum_{j=0}^{\infty} \thresh{g}_u(V_i,s,\bm Z_{ij}) \bigg)\bigg] \nonumber \\
  ={}& \EE \bigg[\EE\bigg[\prod_{i=1}^{T} \exp\bigg( -\sum_{s \in \mathcal{S}}\sum_{j=0}^{\infty} \thresh{g}_u(V_i,s,\bm Z_{ij})  \bigg) \, \bigg| \, T\bigg] \bigg] \nonumber \\
   ={}&\EE\bigg[ \bigg( \EE\bigg[\exp\bigg( - \sum_{s \in \mathcal{S}} \sum_{j=0}^{\infty} \thresh{g}_u(V,s,\bm Z_{j})\bigg)\bigg] \bigg)^{T}\bigg], 
 \end{align}
 where we used in the last step that $\sum_{s \in \mathcal{S}}\sum_{j=1}^{\infty}\delta_{(V_i,s,\bm Z_{ij})}$, $i \in \NN$, are i.i.d.~copies of the process $\sum_{s \in \mathcal{S}}\sum_{j=1}^{\infty}\delta_{(V,s,\bm Z_{j})}.$
Then, we see that the right-hand side of \eqref{eq:pgf-poisson} is just the probability-generating function of the Poisson random variable $T$, that is $m_{T}(q)=\EE\left[ q^{T}\right]$ with $$q=\EE\bigg[\exp\bigg( - \sum_{s \in \mathcal{S}} \sum_{j=0}^{\infty} \thresh{g}_u(V,s,\bm Z_{j}) \bigg)\bigg],$$ 
which is given by $m_{T}(q)=\EE\left[ q^{T}\right]=\exp(-\theta(1-q))$. Therefore, we get that
\begin{align*}
 & \EE\bigg[ \exp\bigg( - \sum_{s \in \mathcal{S}} \sum_{i=1}^{T}\sum_{j=0}^{\infty} g(V_i,s, \bm Z_{ij}) \mathds{1}\{r^{(s)}(Z_{ij}) > u\} \bigg)\bigg] \\ 
 ={}&\EE\bigg[ \bigg( \EE\bigg[\exp\bigg( - \sum_{s \in \mathcal{S}} \sum_{j=0}^{\infty} \thresh{g}_u(V,s,\bm Z_{j})\bigg)\bigg] \bigg)^{T}\bigg] \\
  ={}&\exp\bigg(-\theta \bigg( 1-  \EE \bigg[\exp\bigg\{ - \sum_{s \in \mathcal{S}} \sum_{j=0}^{\infty}\thresh{g}_u(V,s,\bm Z_{j})  \bigg\}\bigg]\bigg) \bigg) \\
 ={}&\exp\bigg(-\theta \bigg( 1-  \EE \bigg[\int_0^1\exp\bigg\{ -  \sum_{s \in \mathcal{S}} \sum_{j=0}^{\infty}\thresh{g}_u(t,s,\bm Z_{j})  \bigg\}\, \mathrm{d}t\bigg]\bigg) \bigg) \\
 ={}& \exp\bigg(-\theta \int_{0}^{1}\bigg( 1-  \EE \bigg[\exp\bigg\{-\sum_{s \in \mathcal{S}} \sum_{j=0}^{\infty} \thresh{g}_u(t,s,\bm Z_{j})\bigg\}\bigg]\bigg) \, \mathrm{d}t \bigg).
 \end{align*}

Now, we show that 
\begin{align*}
  &\exp\bigg(- \theta \int_{0}^{1}\bigg( 1-  \EE \bigg[\exp\bigg\{-\sum_{s \in \mathcal{S}} \sum_{j=0}^{\infty} \thresh{g}_u(t,s,\bm Z_{j})\bigg\}\bigg]\bigg) \, \mathrm{d}t \bigg) \\
  ={}&\exp\bigg(-\int_{0}^{1} \EE\bigg[\bigg(\exp\bigg(-\sum_{s \in \mathcal{S}} \sum_{j=1}^{\infty} \thresh{g}_u(t,s,\bm Y_{j}) \bigg)-\exp\bigg(-\sum_{s \in \mathcal{S}} \sum_{j=0}^{\infty} \thresh{g}_u(t,s,\bm Y_{j}) \bigg) \bigg)\bigg] \, \mathrm{d}t \bigg).
\end{align*}

By \eqref{result_before_bounded_convergence_1} and 
Lemma \ref{Theorem_sim_cond_4.3}, it holds with $v=1$ that
\begin{align*}
  &\theta \bigg( 1-  \EE \bigg[\exp\bigg\{-\sum_{s \in \mathcal{S}} \sum_{j=0}^{\infty} \thresh{g}_u(t,s,\bm Z_{j})\bigg\}\bigg]\bigg)\\
 ={}&\theta \bigg(1- \theta^{-1}\EE\bigg[\exp\bigg \{-\sum_{s \in \mathcal{S}} \sum_{j=-\infty}^{\infty} \thresh{g}_u(t,s,\bm Y_{j}) \bigg\} \cdot \one\bigg\{\sup_{t \leq -1} \| \bm Y_t\| \leq 1\bigg\}\bigg] \bigg) \displaybreak[0]\\ 
  ={}& \theta \bigg(1- \lim_{n \to \infty}\EE\bigg[\exp\bigg(-\sum_{s \in \mathcal{S}}\sum_{j=1}^{r_n} \thresh{g}_u(t,s,a_{n}^{-1} \bm X_{j}) \bigg) \, \bigg| \, M_{r_n}^{X}> a_{n}   \bigg] \bigg) \displaybreak[0] \\
  ={}&  \theta \bigg(1-\bigg(1-\frac{1}{\theta}\EE\bigg[\bigg(\exp\bigg(-\sum_{s \in \mathcal{S}} \sum_{j=1}^{\infty} \thresh{g}_u(t,s,\bm Y_{j})\bigg) \\
& \hspace{4cm} - \exp\bigg(-\sum_{s \in \mathcal{S}} \sum_{j=0}^{\infty} \thresh{g}_u(t,s,\bm Y_{j})\bigg) \bigg) \bigg] \bigg) \bigg) \\
={}&  \EE\bigg[\bigg(\exp\bigg(-\sum_{s \in \mathcal{S}} \sum_{j=1}^{\infty} \thresh{g}_u(t,s,\bm Y_{j}) \bigg)-\exp\bigg(-\sum_{s \in \mathcal{S}} \sum_{j=0}^{\infty} \thresh{g}_u(t,s,\bm Y_{j}) \bigg) \bigg)\bigg].
\end{align*}
The assertion follows with Lemma \ref{thm_laplacefunctional_neu}.
\end{proof}

\begin{proof}[Proof of Theorem~\ref{thm_laplacefunctional_marked}]
Define the mapping 
\begin{align*}
    \varphi: [0,1] \times \mathcal{S} \times (E \setminus \{\bm 0\}) \to{}&  [0,1] \times \mathcal{S} \times (E \setminus \{\bm 0\}) \times [0,\infty), \\
    (t,s,\bm x) \mapsto{}& (t,s,\bm x, \ell^{(s)}(\bm x)).
\end{align*}
By Theorem \ref{point_process_N_explicit_representation}, it holds 
that $N_n(u) \to N(u)$ converges weakly as $n \to \infty$. Then, by the continuous mapping theorem (see Theorem 7.1.19 in \citet{kulik2020heavy}), it holds $$M_n(u) = N_n(u) \circ \varphi^{-1} \to N(u) \circ \varphi^{-1} = M(u)$$
converges weakly as $n \to \infty$ because $\varphi$ is continuous.
\end{proof}

\section*{Acknowledgements}
This work has been supported by the integrated project “Climate
Change and Extreme Events - ClimXtreme 2 Module B - Statistics (subproject B3.1)” funded
by the Federal Ministry of Research, Technology and Space (BMFTR) with grant number 01LP2323I, which is gratefully acknowledged.
In addition, we thank participants of the Conference on Extreme Value Analysis (EVA 2025) for helpful comments and discussions, which have led us to use site-dependent risk functionals.

\bibliographystyle{abbrvnat}
\bibliography{refs.bib}

 \end{document}